\date{}
\newtheorem{statement}{}[section]
\newtheorem{theoreme}[statement]{Theorem}
\newtheorem{lemme}[statement]{Lemma}
\newtheorem{proposition}[statement]{Proposition}
\newtheorem{definition}[statement]{Definition}
\newcommand\C{\mathbb C}
\newcommand\N{\mathbb N}
\newcommand\R{\mathbb R}
\newcommand\T{\mathbb T}
\newcommand\D{\mathbb D}
\newcommand\Z{\mathbb Z}
\newcommand\e{{\rm e}}
\newcommand\eps{\varepsilon}
\newcommand\ind{{\rm 1\kern-.30em I}}
\let\phi=\varphi
\title{\bf Approximation numbers of composition operators on the Hardy space of the infinite polydisk}
\author{\it Daniel~Li,  Herv\'e~Queff\'elec, L.~Rodr{\'\i}guez-Piazza}
\date{\footnotesize \today}
\begin{document}

\maketitle

\noindent{\bf Abstract.} We study the composition operators of the Hardy space on $\D^\infty \cap \ell_1$, the $\ell_1$ part of the infinite polydisk, and the 
behavior of their approximation numbers.

\section{Introduction}

Recently, in \cite{BLQR}, we investigated approximation numbers $a_{n}(C_\varphi), n\geq 1$, of composition operators $C_\varphi,\ C_{\varphi}(f)=f\circ \varphi$, on the 
Hardy or Bergman spaces $H^{2}(\Omega)$, $B^{2}(\Omega)$ over a bounded symmetric domain $\Omega\subseteq \C^d$. Assuming that 
$\varphi(\Omega)$ has non-empty interior, one of the main results of this study was the following theorem.

\begin{theoreme} [\cite{BLQR}]
Let $C_\varphi \colon H^{2}(\Omega)\to H^{2}(\Omega)$ be compact. Then:  \par\smallskip

$1)$ we always have $a_{n}(C_\varphi)\geq c \, \e^{- C \, n^{1/d}}$ where $c, C$ are positive constants; \par\smallskip

$2)$ if $\Omega$ is a product of balls and if $\varphi(\Omega)\subseteq r \, \Omega$ for some $r<1$, then:
\begin{displaymath} 
a_{n} (C_\varphi) \leq C \, \e^{-c \, n^{1/d}} \, .
\end{displaymath}
\end{theoreme}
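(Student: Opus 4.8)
The plan is to prove the two estimates by quite different mechanisms: the upper bound $(2)$ by factoring $C_\varphi$ through a dilation, and the universal lower bound $(1)$ by a Bernstein--Remez type inequality for polynomials combined with a dimension count. In both directions the exponent $1/d$ enters through a single combinatorial fact: the number of monomials $z^\alpha$ in $d$ complex variables of total degree $<k$ is $\sum_{j<k}\binom{j+d-1}{d-1}=\binom{k+d-1}{d}\sim k^d/d!$.

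For the upper bound I would fix $\rho$ with $r<\rho<1$ and factor $\varphi=M_\rho\circ\psi$, where $M_\rho(z)=\rho z$ is the dilation and $\psi=\varphi/\rho$. The hypothesis $\varphi(\Omega)\subseteq r\Omega$ gives $\psi(\Omega)\subseteq(r/\rho)\Omega$, so $\psi(\Omega)$ is relatively compact in $\Omega$; since the sup of $|f|$ over a compact $K\subseteq\Omega$ is dominated by $\|f\|_{H^2}$ (the reproducing kernel being bounded on $K$), one gets $\|f\circ\psi\|_{H^2}\le C\|f\|_{H^2}$, so $C_\psi$ is bounded (in fact compact). Writing $C_\varphi=C_\psi C_{M_\rho}$ and using the ideal property $a_n(C_\psi C_{M_\rho})\le\|C_\psi\|\,a_n(C_{M_\rho})$, it remains to estimate the dilation. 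Because $\Omega$ is a product of balls, the monomials form an orthogonal basis of $H^2(\Omega)$ and $C_{M_\rho}z^\alpha=\rho^{|\alpha|}z^\alpha$, so the singular value $\rho^{\,j}$ occurs with multiplicity $\binom{j+d-1}{d-1}$. Arranging these in decreasing order and using the cumulative count above, $a_n(C_{M_\rho})=\rho^{\,k}$ once $n>\binom{k+d-1}{d}\sim k^d/d!$, i.e. $a_n(C_{M_\rho})\le C\,\e^{-c\,n^{1/d}}$, and $(2)$ follows.

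For the lower bound I would exploit that $\varphi(\Omega)$ has nonempty interior: by Sard's theorem choose a regular value $a$ of $\varphi$ inside that interior and a point $w_0$ with $\varphi(w_0)=a$ and $\varphi'(w_0)$ invertible, so that $\varphi$ is a biholomorphism of a neighborhood of $w_0$ onto a neighborhood $V$ of $a$. The strategy is to exhibit, for each $k$, a subspace $E_k$ of dimension $\sim k^d$ on which $C_\varphi$ is bounded below by $c\,\e^{-Ck}$, since $a_{\dim E_k}(C_\varphi)\ge\inf_{f\in E_k}\|C_\varphi f\|/\|f\|$ (any operator of rank $<\dim E_k$ vanishes at some nonzero $f\in E_k$). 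I would take $E_k$ to be the polynomials of degree $<k$. The interior sub-mean-value estimate gives $\|C_\varphi f\|_{H^2(\Omega)}\gtrsim\|f\circ\varphi\|_{L^2(B)}$ for a small ball $B$ around $w_0$, and the local biholomorphic change of variables then bounds this below by $\|f\|_{L^2(V')}$ for a ball $V'\subseteq V$ centred at $a$.

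The heart of the matter, and the step I expect to be the main obstacle, is the ensuing Bernstein--Remez inequality: for a polynomial $f$ of degree $<k$ one must dominate the global norm by the local one up to a factor only exponential in $k$, namely $\|f\|_{H^2(\Omega)}\le C\,\e^{Ck}\,\|f\|_{L^2(V')}$. Granting this, $C_\varphi$ is bounded below on $E_k$ by $c\,\e^{-Ck}$; with $\dim E_k=\binom{k+d-1}{d}\sim k^d/d!$ and the monotonicity of $(a_n)$ one obtains $a_n(C_\varphi)\ge c\,\e^{-C\,n^{1/d}}$, which is $(1)$. Establishing the exponential (rather than merely finite) dependence on the degree $k$ in this polynomial comparison, uniformly over the symmetric domain $\Omega$, is the delicate point, and it is precisely there that the complex dimension $d$, through the count $k^d$, forces the rate $n^{1/d}$ in both directions.
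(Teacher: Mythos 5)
You should first note that this paper contains no proof of this statement: it is quoted verbatim from \cite{BLQR} as background, so the only internal comparison available is with the paper's own infinite-dimensional analogue, Theorem~\ref{geninf}. There the lower bound is obtained by a \emph{spectral} route: one produces the K\"onigs-type eigenvalues $\lambda^\alpha$ of $C_\varphi$ (Theorem~\ref{plusbluff}, after moving a fixed point to $0$ by automorphisms) and converts eigenvalue counts into approximation-number bounds via Weyl's inequality $\prod_{j\le n}|\lambda_j|\le\prod_{j\le n}a_j$, i.e. $|\lambda_{2n}|^2\le a_1a_n$; in dimension $d$ the rate $n^{1/d}$ would come from counting $\{\lambda^\alpha:|\alpha|<k\}$, which is the same $\binom{k+d-1}{d}\sim k^d/d!$ count you use. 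Your route for $(1)$ --- bounding $C_\varphi$ below on the $\binom{k+d-1}{d}$-dimensional space of polynomials of degree $<k$ --- is genuinely different and buys real generality: it needs no fixed point $a\in\Omega$, no injectivity of $\varphi'(a)$, and no spectral theorem, only that $\varphi$ be biholomorphic near one point, which follows from the standing hypothesis that $\varphi(\Omega)$ has nonempty interior (Sard is overkill: if $\det\varphi'\equiv 0$ the image has measure zero, so just pick $w_0$ with $\det\varphi'(w_0)\neq 0$). Moreover, the step you flag as the main obstacle is in fact standard pluripotential theory and not an obstacle at all: it is the Bernstein--Walsh (Siciak) inequality $|p(z)|\le\|p\|_{K}\,\exp\big(k\,V_K(z)\big)$ for $\deg p\le k$, where for $K=\overline{B}(a,\delta)$ the extremal function is explicitly $V_K(z)=\log^{+}\!\big(|z-a|/\delta\big)$, hence bounded on the bounded set $\overline{\Omega}$; combining this with the \emph{degree-free} subharmonicity bound $\|p\|_{L^\infty(B(a,\delta/2))}\le C\delta^{-d}\|p\|_{L^2(B(a,\delta))}$ and the contractive embedding $H^\infty(\Omega)\subseteq H^2(\Omega)$ gives exactly your $\|p\|_{H^2(\Omega)}\le C\e^{Ck}\|p\|_{L^2(V')}$. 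Your upper-bound argument for $(2)$ (factor $C_\varphi=C_\psi C_{M_\rho}$, use the ideal property, and read off $a_n(C_{M_\rho})$ from the singular value $\rho^j$ with multiplicity $\binom{j+d-1}{d-1}$) is correct and is essentially the standard mechanism for such estimates, the same diagonal/counting computation the present paper performs in Theorems~\ref{soft} and~\ref{supscha} in infinite dimension.

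Two small repairs. First, the inequality $\|C_\varphi f\|_{H^2}\gtrsim\|f\circ\varphi\|_{L^2(B)}$ is not a ``sub-mean-value estimate'': it is the boundedness of the embedding $H^2(\Omega)\hookrightarrow L^2(B)$ for $B$ relatively compact in $\Omega$, which holds because the reproducing kernel of $H^2(\Omega)$ is bounded on compact subsets (the sub-mean-value property is what you need later, in the degree-free local $L^\infty$--$L^2$ comparison). Second, in the factorization $\varphi=M_\rho\circ\psi$ you are implicitly using that $\Omega$ is balanced and convex, so that $M_\rho(\Omega)\subseteq\Omega$ and $(r/\rho)\overline{\Omega}$ is a compact subset of $\Omega$; this is true for a product of balls, which is exactly the hypothesis of $(2)$, but it is worth saying. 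Note also that the monomials on a product of balls are orthogonal rather than orthonormal; this is harmless, since $C_{M_\rho}$ remains diagonal after normalization, with the same singular values $\rho^{|\alpha|}$.
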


As a result, the minimal decay of approximation numbers is slower and slower as the dimension $d$ increases, which might lead one to think that, in 
infinite-dimension, no compact composition operators can exist, since their approximation numbers will not tend to $0$. After all, this is the case for the Hardy 
space of a half-plane, which supports no compact composition operator (\cite{MAT}, Theorem 3.1; in \cite{ELLIOT-JURY}, it is moreover proved that 
$\| C_\phi \|_e = \|C_\phi\|$ as soon as $C_\phi$ is bounded; see also \cite{Shap-Smith} for a necessary and sufficient condition for $H^2 (\Omega)$ has 
compact composition operators, where $\Omega$ is a domain of $\C$). We will see that this is not quite the case here, even though the decay will be severely 
limited. In particular, we will never have a decay of the form $C\, \e^{- c \, n^{\delta}}$ for some $c, C, \delta > 0$. 

\section{Framework and reminders}

%
%

\subsection{ Hardy spaces on $\D^\infty$}

Let $\T = \partial \D$ be the unit circle of the set of complex numbers. We consider $\T^\infty$ and equip it with its Haar measure $m$. This is a compact 
Abelian group  with dual $\Z^{(\infty)}$, the set of eventually zero sequences $\alpha = (\alpha_j)_{j\geq 1}$ of integers. We denote 
$L^{2}_{\N^{(\infty)}} (\T^\infty)$ the Hilbert subspace of $L^{2} (\T^\infty)$ formed by the functions $f$ whose Fourier spectrum is contained in 
$\N^{(\infty)}$:
\begin{displaymath}
\qquad\quad  \widehat{f}(\alpha):=\int_{\T^\infty} f (z) \, \overline{z}^{\alpha}\, dm (z)=0 \qquad \text{if } \alpha\notin  \N^{(\infty)} \, .
\end{displaymath}
The set $E := \N^{(\infty)}$ is called the \emph{narrow cone of Helson}, and we also denote  $L^{2}_{\N^{(\infty)}} (\T^\infty) = L^{2}_{E}(\T^\infty)$. 
Any element of that subspace can be formally written as:
\begin{displaymath} 
\hskip 50 pt f = \sum_{\alpha\geq 0} c_\alpha \, e_\alpha \qquad 
\text{with } c_\alpha = \widehat{f}(\alpha) \quad \text{and} \quad \sum_{\alpha \geq 0} |c_\alpha|^2 < \infty \, .
\end{displaymath} 
Here, $(e_\alpha)_{\alpha\in \Z^{(\infty)}}$ is the canonical basis of $L^{2}(\T^\infty)$ formed by characters, and accordingly 
$(e_\alpha)_{\alpha\in \N^{(\infty)}}$ is the canonical basis of $L^{2}_E (\T^\infty)$.  
\medskip

Now we consider $\Omega_2 = \D^\infty \cap \ell_2$. 
\par\smallskip

Any $f\sim \sum_{\alpha \geq 0} c_\alpha \, e_\alpha\in L^2_E (\T^\infty)$ defines an analytic function on the infinite-dimensional Reinhardt domain 
$\Omega_2$ by the formula: 
\begin{equation}\label{formula} 
f (z)=\sum_{\alpha \geq 0} c_\alpha \, z^\alpha
\end{equation}
where the series is absolutely convergent for each $z = (z_j)_{j\geq 1}\in \Omega_2$, as the pointwise product of two square-summable sequences. Indeed, using 
an Euler type formula, we get for $z\in \Omega_2$:
\begin{displaymath} 
\sum_{\alpha \geq 0}|z^{\alpha}|^2 = \prod_{j=1}^\infty (1 -|z_j|^2)^{-1}<\infty \, ,
\end{displaymath} 
and hence, by the Cauchy-Schwarz inequality:
\begin{displaymath} 
\sum_{\alpha \geq 0}|c_\alpha \, z^\alpha| \leq 
\bigg( \sum_{\alpha \geq 0}|c_\alpha|^2\bigg)^{1/2} \bigg(\sum_{\alpha \geq 0}|z^\alpha|^2\bigg)^{1/2} <  \infty \, .
\end{displaymath} 
If $\alpha\in E$ and $z\in \Omega_2$, we have set, as usual, $z^\alpha=\prod_{j\geq 1} z_{j}^{\alpha_j}$. 
\smallskip

This shows that $L^2_E (\T^\infty)$ can be identified with $H^{2}(\Omega_2)$, the Hardy-Hilbert space of analytic functions 
$f(z)=\sum_{\alpha \geq 0} c_{\alpha} \, z^{\alpha}$ on $\Omega_2$ with 
\begin{displaymath} 
\Vert f \Vert^{2} :=\sum_{\alpha \geq 0}|c_\alpha|^2 < \infty \, . 
\end{displaymath} 
This setting  is customary in connection with Dirichlet series (see \cite{COGA}). 

\medskip

In this paper, for a technical reason appearing  below in the proof of Proposition~\ref{simple}, we will consider, instead of $\Omega_2 = \D^\infty \cap \ell_2$, 
the sub-domain:
\begin{displaymath} 
\Omega= \D^\infty \cap \ell_1 \, , 
\end{displaymath} 
i.e. the \textit{open} subset of $\ell^1$ formed by the sequences: 
\leavevmode\vspace{-0.5 \baselineskip}
\begin{displaymath} 
\qquad z = (z_n)_{n\geq 1} \quad \text{such that} \quad |z_n|< 1\, ,  \forall \, n \geq 1, \quad \text{and}\quad \sum_{n=1}^\infty |z_n|<\infty \, ,
\end{displaymath} 
\vskip - 5 pt \noindent
and the restrictions to $\Omega$ of the functions $f \in H^{2}(\Omega_2)$. We denote $H^{2}(\Omega)$ the space of such restrictions. 

Hence $f \in H^{2}(\Omega)$ if and only if:
\begin{displaymath} 
f (z)=\sum_{\alpha \geq 0} c_{\alpha} \, z^{\alpha} \quad \text{with } z \in \Omega \, ,
\end{displaymath} 
and $\Vert f \Vert^{2} :=\sum_{\alpha \geq 0}|c_\alpha|^2 < \infty$. 
\smallskip

We now identify the space $L^2_E (\T^\infty)$  with the space $H^{2}(\Omega)$.
\medskip
 
We more generally define Hardy spaces $H^p (\Omega)$, for $1\leq p<\infty$, in the usual way:
\begin{displaymath} 
H^{p} = H^{p}(\Omega)= \{f \colon \Omega \to \C \, ; \  \Vert f \Vert_p<\infty\} \, , 
\end{displaymath} 
where $f$ is analytic in $\Omega$ and $\Vert f \Vert_p = \sup_{0<r<1}M_{p}(r,f)=\lim_{r\to 1^-} M_{p}(r,f)$ with:
\begin{displaymath} 
M_{p}(r,f)=\bigg(\int_{\T^\infty}|f (r z)|^p \, dm(z)\bigg)^{1/p}, \quad 0<r<1 \, . 
\end{displaymath} 
We have $\Vert f \Vert=\Vert f\Vert_2$. Moreover, $H^q$ contractively embeds into $H^p$ for $p<q$.
  
\subsection{Singular numbers}

 We begin with a reminder of operator-theoretic facts. We recall that the approximation numbers  $a_{n}(T) = a_n$ of an operator $T \colon H\to H$ (with 
$H$ a Hilbert space) are defined by:
\begin{displaymath} 
a_n = \inf_{\textrm{rank}\, R< n} \Vert T - R\Vert \, .
\end{displaymath} 

According to a 1957's result of Allahverdiev (see \cite{CARL-STEPH}, page~155), we have $a_n=s_n$, the $n$-th singular number of $T$. 
We also recall a basic result due to H.~Weyl and one obvious consequence:
\begin{theoreme}\label{hermann} 
Let $T \colon H\to H$ be a compact operator with eigenvalues $(\lambda_n)$ rearranged in decreasing order and singular numbers $(a_n)$. Then:
\begin{displaymath} 
\qquad \prod_{j=1}^n |\lambda_j|\leq \prod_{j=1}^n a_j \quad \text{for all } n\geq 1 \,.
\end{displaymath} 
As a consequence:
$$|\lambda_{2n}|^2\leq a_1 a_n.$$
\end{theoreme}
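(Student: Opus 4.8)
The plan is to deduce the stated consequence directly from Weyl's multiplicative inequality applied at the index $2n$, exploiting the monotonicity of both sequences $(|\lambda_j|)$ and $(a_j)$. First I would invoke the displayed inequality with $2n$ in place of $n$:
\begin{displaymath}
\prod_{j=1}^{2n} |\lambda_j| \leq \prod_{j=1}^{2n} a_j \, .
\end{displaymath}
Since the eigenvalues are arranged in decreasing order of modulus, every factor on the left satisfies $|\lambda_j| \geq |\lambda_{2n}|$ for $j \leq 2n$, so the left-hand side is bounded below by $|\lambda_{2n}|^{2n}$.

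For the right-hand side, the key idea is to split the product into the first $n$ and the last $n$ factors and estimate each block separately. Using that the singular numbers decrease, the first block is controlled by $a_1$ and the second block by $a_n$, which gives
\begin{displaymath}
\prod_{j=1}^{2n} a_j = \Big( \prod_{j=1}^{n} a_j \Big) \Big( \prod_{j=n+1}^{2n} a_j \Big) \leq a_1^{\,n} \, a_n^{\,n} = (a_1 a_n)^n \, .
\end{displaymath}
Combining the two estimates yields $|\lambda_{2n}|^{2n} \leq (a_1 a_n)^n$, and taking $n$-th roots of these nonnegative quantities produces the announced bound $|\lambda_{2n}|^2 \leq a_1 a_n$.

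The argument is elementary once the right grouping is chosen, and I expect no genuine obstacle: everything reduces to the monotonicity of the two sequences together with the multiplicative Weyl estimate already in hand. The only point meriting attention is the \emph{asymmetric} split of the singular-value product, namely bounding the first $n$ factors by $a_1$ and the last $n$ by $a_n$ rather than by a single value; it is precisely this choice that yields the product $a_1 a_n$ and not a single power of one singular number.
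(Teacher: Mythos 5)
Your proposal is correct and coincides with the intended argument: the paper states the bound $|\lambda_{2n}|^2 \leq a_1 a_n$ as an ``obvious consequence'' of Weyl's multiplicative inequality without writing out a proof, and the standard derivation is exactly yours --- apply the inequality at index $2n$, bound the left side below by $|\lambda_{2n}|^{2n}$ using the decreasing rearrangement, and bound the right side above by $a_1^{\,n} a_n^{\,n}$ via the asymmetric split of the singular-value product. The only (harmless) point worth a word is the convention that if $T$ has fewer than $2n$ nonzero eigenvalues the sequence is padded with zeros, in which case the inequality holds trivially.
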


\subsection{Spectra of projective tensor products}

The following operator-theoretic result will play a basic role in the sequel.
Let $E_1,\ldots, E_n$ be Banach spaces and let $E=\otimes_{i=1}^n E_i$ their \textit{projective} tensor product (the only tensor product we shall use). 
If $T_i\in \mathcal{L}(E_i)$, we define as usual their projective tensor product $T=\otimes_{i=1}^n T_i\in \mathcal{L}(E)$ by its action on the atoms of $E$, 
namely:
\begin{displaymath} 
T(\otimes_{i=1}^n x_i)=\otimes_{i=1}^n T_{i}(x_i) \, .
\end{displaymath} 
Denote in general $\sigma(x)$ the spectrum of $x\in \mathcal{A}$ where $\mathcal{A}$ is a unital Banach algebra. We recall 
(\cite{RU}, chap.11, Theorem 11.23) the following result.
\begin{lemme}\label{rud} 
Let $\mathcal{A}$ be a unital Banach algebra, and $x_1,\ldots, x_n$ be \emph{pairwise commuting} elements of $\mathcal{A}$. Then:
\begin{displaymath} 
\sigma(x_1\cdots x_n)\subseteq \prod_{i=1}^n \sigma(x_i) \, .
\end{displaymath} 
\end{lemme}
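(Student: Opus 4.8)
The plan is to reduce the assertion to commutative Gelfand theory by passing to a well-chosen commutative subalgebra in which spectra are preserved. Since $x_1,\dots,x_n$ pairwise commute, the unital subalgebra they generate is commutative; by Zorn's lemma I would enlarge it to a \emph{maximal} commutative subalgebra $\mathcal{B}\subseteq\mathcal{A}$ containing the unit and $x_1,\dots,x_n$. Being maximal commutative, $\mathcal{B}$ is automatically closed (the closure of a commutative set is commutative) and is therefore itself a unital commutative Banach algebra, to which the full Gelfand machinery applies.

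The crucial point --- and the main obstacle --- is that the spectrum in the statement is computed relative to $\mathcal{A}$, whereas Gelfand theory naturally produces the spectrum relative to $\mathcal{B}$. In general one only has $\sigma_{\mathcal{A}}(x)\subseteq\sigma_{\mathcal{B}}(x)$, which is the wrong direction for the factors appearing on the right-hand side. This is exactly where maximality is decisive: if $x\in\mathcal{B}$ is invertible in $\mathcal{A}$, then $x^{-1}$ commutes with every element that commutes with $x$, in particular with all of $\mathcal{B}$, so $x^{-1}\in\mathcal{B}$ by maximality. Thus $\mathcal{B}$ is inverse-closed in $\mathcal{A}$, and consequently $\sigma_{\mathcal{B}}(x)=\sigma_{\mathcal{A}}(x)$ for every $x\in\mathcal{B}$.

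With this spectral permanence secured, I would conclude via the Gelfand transform. Let $\Delta$ denote the maximal ideal space (the set of nonzero characters) of $\mathcal{B}$; for each $x\in\mathcal{B}$ one has $\sigma_{\mathcal{B}}(x)=\{h(x):h\in\Delta\}$. Since $x_1\cdots x_n\in\mathcal{B}$ and every character $h$ is multiplicative, $h(x_1\cdots x_n)=h(x_1)\cdots h(x_n)$. Combining the three ingredients yields
\begin{displaymath}
\sigma(x_1\cdots x_n)=\{h(x_1)\cdots h(x_n):h\in\Delta\}\subseteq\prod_{i=1}^n\{h(x_i):h\in\Delta\}=\prod_{i=1}^n\sigma(x_i),
\end{displaymath}
which is the desired inclusion. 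The only genuinely delicate step is the spectral permanence, namely the passage from $\sigma_{\mathcal{B}}$ to $\sigma_{\mathcal{A}}$ guaranteed by maximality; everything else is the routine dictionary between characters and the spectrum in a commutative Banach algebra.
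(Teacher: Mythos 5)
Your proof is correct, and it coincides with the paper's treatment: the authors give no proof of Lemma~\ref{rud} but cite Rudin's \emph{Functional Analysis}, Theorem~11.23, whose standard proof is exactly your argument (embed the commuting elements in a maximal commutative subalgebra, note it is closed and inverse-closed so that $\sigma_{\mathcal{B}}=\sigma_{\mathcal{A}}$ on it, then apply multiplicativity of characters via the Gelfand transform). You also correctly isolated the one delicate point, the spectral permanence furnished by maximality.
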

Here, $\prod_{i=1}^n \sigma(x_i)$ is the product in the Minkowski sense, namely:
\begin{displaymath} 
\prod_{i=1}^n \sigma(x_i)=\bigg\{\prod_{i=1}^n \lambda_i : \lambda_i\in \sigma(x_i)\bigg\} \, .
\end{displaymath} 

As a consequence, we then have the following lemma due to Schechter, which we prove under a weakened form, sufficient here, and which is indeed already in 
\cite{ARGALI} (we just add a few details because this is a central point in our estimates).
\begin{lemme} 
Let $F$ be a Banach space, $T_1,\ldots,T_n\in \mathcal{L}(F)$ and $T=\otimes_{i=1}^n T_i$. Then $\sigma(T)\subset \prod_{i=1}^n \sigma(T_i)$.
\end{lemme}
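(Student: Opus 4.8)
The plan is to exhibit $T=\otimes_{i=1}^n T_i$ as a product of $n$ pairwise commuting operators on $E$ and then invoke Lemma~\ref{rud} in the unital Banach algebra $\mathcal{A}=\mathcal{L}(E)$. To this end I would introduce, for each $i\in\{1,\dots,n\}$, the \emph{one-slot} operator
\begin{displaymath}
\widetilde T_i := I\otimes\cdots\otimes I\otimes T_i\otimes I\otimes\cdots\otimes I\in\mathcal{L}(E),
\end{displaymath}
with $T_i$ in the $i$-th position and the identity of $F$ in every other slot. Each $\widetilde T_i$ is bounded on the projective tensor product because the projective tensor norm is submultiplicative, $\Vert\otimes_j S_j\Vert\leq\prod_j\Vert S_j\Vert$. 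Reading the action on the atoms $\otimes_i x_i$ off the definition, one sees at once both that distinct slots act independently, so that $\widetilde T_i\widetilde T_j=\widetilde T_j\widetilde T_i$ for all $i,j$, and that $T=\widetilde T_1\widetilde T_2\cdots\widetilde T_n$ on atoms, hence everywhere by linearity and density.

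Second, I would apply Lemma~\ref{rud} to the pairwise commuting family $\widetilde T_1,\dots,\widetilde T_n$ in $\mathcal{A}=\mathcal{L}(E)$, which gives directly
\begin{displaymath}
\sigma(T)=\sigma(\widetilde T_1\cdots\widetilde T_n)\subseteq\prod_{i=1}^n\sigma(\widetilde T_i).
\end{displaymath}
It then remains to reduce each $\sigma(\widetilde T_i)$ to $\sigma(T_i)$. For $\lambda\notin\sigma(T_i)$, the operator $T_i-\lambda I$ is invertible in $\mathcal{L}(F)$, and by multilinearity of the tensor product one has
\begin{displaymath}
\widetilde T_i-\lambda I_E = I\otimes\cdots\otimes(T_i-\lambda I)\otimes\cdots\otimes I,
\end{displaymath}
whose inverse is the one-slot operator $I\otimes\cdots\otimes(T_i-\lambda I)^{-1}\otimes\cdots\otimes I$, again bounded on $E$ for the same submultiplicativity reason. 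Thus $\lambda\notin\sigma(\widetilde T_i)$, so $\sigma(\widetilde T_i)\subseteq\sigma(T_i)$, and combining the two inclusions yields $\sigma(T)\subseteq\prod_{i=1}^n\sigma(T_i)$.

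The conceptual work is entirely carried by the quoted Lemma~\ref{rud}; the only point requiring genuine care is the passage from $F$ to the tensor product $E$. Concretely, one must justify that the one-slot operators $\widetilde T_i$ and, above all, the candidate resolvent $I\otimes\cdots\otimes(T_i-\lambda I)^{-1}\otimes\cdots\otimes I$ are genuinely bounded operators on the \emph{projective} tensor product, and that the factorisation $T=\widetilde T_1\cdots\widetilde T_n$ and the resolvent identity, both verified on atoms, extend by density to all of $E$. This is exactly where the submultiplicativity of the projective tensor norm intervenes, and it is the reason the projective tensor product is the natural setting for the statement.
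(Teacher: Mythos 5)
Your proof is correct and takes essentially the same route as the paper, which (for $n=2$) sets $x_1=T_1\otimes I_2$, $x_2=I_1\otimes T_2$ — exactly your one-slot operators — notes $x_1x_2=T$ with $\sigma(x_i)=\sigma(T_i)$, and applies Lemma~\ref{rud}. The only difference is that you carefully justify the step $\sigma(\widetilde T_i)\subseteq\sigma(T_i)$ via the one-slot resolvent, a point the paper passes over with ``clearly''; your inclusion (rather than the paper's stated equality) is all the argument needs.
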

\begin{proof} 
To save notation, we assume $n=2$. Let $x_1=T_1\otimes I_2$ and $x_2=I_1\otimes T_2$ where $I_i$ is the identity of $E_i$. Clearly, 
\begin{displaymath} 
x_1 x_2 = x_2 x_1=T_1\otimes T_2=T \quad \text{and}\quad \sigma(x_i)=\sigma (T_i)
\end{displaymath} 
where the spectrum of $x_i$ is in the Banach algebra $\mathcal{L}(E)$ and that of $T_i$ in $\mathcal{L}(E_i)$. Lemma~\ref{rud} now gives:
\begin{displaymath} 
\sigma(T) =\sigma(x_1 x_2)\subseteq \sigma(x_1) \, \sigma(x_2)= \sigma(T_1)\, \sigma(T_2) \, , 
\end{displaymath} 
hence the result.
\end{proof}
%
\subsection{ Schur maps and  composition operators}

We now pass to some general facts on composition operators $C_\varphi$, defined by $C_{\varphi}(f) = f \circ \varphi$, associated with a Schur map, namely  
a \emph{non-constant} analytic self-map $\varphi$ of $\Omega$. We say that $\phi$ is a \emph{symbol} for $H^2 (\Omega)$ if $C_\phi$ is a bounded 
linear operator from $H^2 (\Omega)$ into itself. 

The differential $\varphi '(a)$ of $\varphi$ at some point $a\in \Omega$ is a bounded linear map $\varphi'(a) \colon \ell^1\to \ell^1$. 
\begin{definition}
The symbol $\varphi$ is said to be \emph{truly infinite-dimensional} if  the differential $\varphi'(a)$ is an \emph{injective} linear map from $\ell^1$ into itself 
for at least one point $a\in \Omega$. 
\end{definition}

In finite dimension, this amounts to saying that $\varphi(\Omega)$ has non-void interior. 
\vskip 5pt

We have the following general result.
\begin{proposition}\label{simple} 
Let $(\varphi_j)_{j \geq 1}$ be a sequence of analytic self-maps of \ $\D$ such that $\sum_{j\geq 1}|\varphi_{j}(0)|<\infty$. Then, the mapping  
$\varphi \colon \Omega\to \C^\infty$ defined by the formula $\varphi(z)=(\varphi_{j}(z_j))_{j\geq 1}$ maps $\Omega$ to itself  and is a symbol for 
$H^2 (\Omega)$.
\end{proposition}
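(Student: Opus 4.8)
The plan is to prove the two assertions separately: first that $\varphi$ maps $\Omega$ into $\Omega$, and then that the associated composition operator is bounded on $H^2(\Omega)$. Both assertions draw on the hypothesis $\sum_{j\geq 1}|\varphi_j(0)|<\infty$, but in different ways.

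For the first assertion, fix $z=(z_j)_{j\geq 1}\in\Omega$. Since each $\varphi_j$ is a self-map of $\D$ and $|z_j|<1$, we have $|\varphi_j(z_j)|<1$ for every $j$, so $\varphi(z)\in\D^\infty$; it remains to check $\varphi(z)\in\ell^1$. Here I would invoke the Schwarz--Pick inequality, which says the pseudo-hyperbolic distance is contracted:
\[
\left| \frac{\varphi_j(z_j)-\varphi_j(0)}{1-\overline{\varphi_j(0)}\,\varphi_j(z_j)} \right| \leq |z_j| \, .
\]
Since the denominator has modulus at most $2$, this gives $|\varphi_j(z_j)-\varphi_j(0)|\leq 2|z_j|$, hence $|\varphi_j(z_j)|\leq|\varphi_j(0)|+2|z_j|$. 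Summing over $j$ and using both $\sum_j|\varphi_j(0)|<\infty$ (hypothesis) and $\sum_j|z_j|<\infty$ (because $z\in\Omega\subseteq\ell^1$) yields $\sum_j|\varphi_j(z_j)|<\infty$, i.e. $\varphi(z)\in\Omega$. This is exactly the step where the restriction to the $\ell^1$ part $\Omega$, rather than $\Omega_2$, is essential: the bound on $\sum_j|\varphi_j(z_j)|$ would fail for $z\in\Omega_2\setminus\Omega$.

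For the boundedness, the idea is to pass through finite truncations and a tensor-product estimate. The monomials $e_\alpha=z^\alpha$, $\alpha\in\N^{(\infty)}$, form an orthonormal basis of $H^2(\Omega)$, so the subspace $\mathcal{F}$ of functions depending on only finitely many variables is dense. For such a function, say $f(z)=g(z_1,\ldots,z_N)$ with $g\in H^2(\D^N)$, the coordinate-wise action of $\varphi$ gives $(f\circ\varphi)(z)=g(\varphi_1(z_1),\ldots,\varphi_N(z_N))$, which is precisely the image of $g$ under the Hilbert-space tensor product $C_{\varphi_1}\otimes\cdots\otimes C_{\varphi_N}$ acting on $H^2(\D^N)=H^2(\D)\otimes\cdots\otimes H^2(\D)$; moreover $\|f\|_{H^2(\Omega)}=\|g\|_{H^2(\D^N)}$, since both norms are the same $\ell^2$-sum of coefficients. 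Using that for Hilbert spaces the tensor product of operators has norm at most the product of the norms (factoring through the operators $I\otimes\cdots\otimes C_{\varphi_j}\otimes\cdots\otimes I$), I obtain
\[
\|f\circ\varphi\|_{H^2(\Omega)} \leq \prod_{j=1}^N \|C_{\varphi_j}\|_{\mathcal{L}(H^2(\D))}\,\|f\|_{H^2(\Omega)} \, .
\]
By the classical Littlewood estimate $\|C_{\varphi_j}\|^2\leq(1+|\varphi_j(0)|)/(1-|\varphi_j(0)|)$, the partial products are dominated by $\prod_{j\geq 1}(1+|\varphi_j(0)|)/(1-|\varphi_j(0)|)$, an infinite product that converges precisely because $\sum_j|\varphi_j(0)|<\infty$. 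Thus there is a finite constant $M=\prod_{j\geq 1}\|C_{\varphi_j}\|$ with $\|f\circ\varphi\|\leq M\|f\|$ for all $f\in\mathcal{F}$, uniformly in the number of variables.

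It remains to upgrade this uniform bound on the dense subspace $\mathcal{F}$ to boundedness of the genuine composition operator on all of $H^2(\Omega)$, which I expect to be the most delicate point. Given $f\in H^2(\Omega)$, I would take $f_N$ to be the partial sum of its monomial expansion over multi-indices supported in $\{1,\ldots,N\}$; then $f_N\to f$ in $H^2$, each $f_N\in\mathcal{F}$, and the estimate above applied to $f_{N'}-f_N$ shows that $(f_N\circ\varphi)_N$ is Cauchy in $H^2(\Omega)$, with limit $h$ satisfying $\|h\|\leq M\|f\|$. To identify $h$ with $f\circ\varphi$, I would use that point evaluations are continuous on $H^2(\Omega)$: for any $\zeta\in\Omega_2$ one has $|g(\zeta)|\leq\|g\|\prod_j(1-|\zeta_j|^2)^{-1/2}$. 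Evaluating at $w\in\Omega$ gives $h(w)=\lim_N(f_N\circ\varphi)(w)=\lim_N f_N(\varphi(w))$, and since $\varphi(w)\in\Omega\subseteq\Omega_2$ by the first part, continuity of evaluation at $\varphi(w)$ forces $\lim_N f_N(\varphi(w))=f(\varphi(w))$. Hence $h=f\circ\varphi$ pointwise, so $f\circ\varphi\in H^2(\Omega)$ with $\|C_\varphi f\|\leq M\|f\|$, proving that $\varphi$ is a symbol with $\|C_\varphi\|\leq\prod_{j\geq 1}\|C_{\varphi_j}\|$.
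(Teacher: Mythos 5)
Your proof is correct and follows essentially the same route as the paper: the Schwarz--Pick bound $|\varphi_j(z_j)-\varphi_j(0)|\leq 2|z_j|$ to show $\varphi(\Omega)\subseteq\Omega$ (using $\sum_j |z_j|<\infty$ and $\sum_j|\varphi_j(0)|<\infty$), together with the product estimate $\Vert C_\varphi\Vert\leq\prod_{j}\Vert C_{\varphi_j}\Vert$ and the one-variable bound $\Vert C_{\varphi_j}\Vert\leq\sqrt{(1+|\varphi_j(0)|)/(1-|\varphi_j(0)|)}$, whose infinite product converges precisely because $\sum_j|\varphi_j(0)|<\infty$. The only difference is presentational: where the paper justifies $\Vert C_\varphi\Vert\leq\prod_j\Vert C_{\varphi_j}\Vert$ in one line by ``separation of variables and Fubini's theorem,'' you flesh out that step via the tensor factorization $C_{\varphi_1}\otimes\cdots\otimes C_{\varphi_N}$ on functions of finitely many variables, followed by a density and point-evaluation argument to identify the limit with $f\circ\varphi$ --- a correct and legitimate expansion of the same idea rather than a different method.
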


\begin{proof} 
First, the Schwarz inequality:
\begin{displaymath} 
|\varphi_{j}(z_j)-\varphi_{j}(0)|\leq2 \, |z_j | 
\end{displaymath} 
shows that $\varphi(z)\in \Omega$ when $z\in \Omega$. To see that $\varphi$ is moreover a symbol for $H^2 (\Omega)$, we use the fact (\cite{COMA}) that:
\begin{equation}\label{usefac} 
\Vert C_{\varphi_j}\Vert\leq \sqrt{\frac{1+|\varphi_{j} (0)|}{1-|\varphi_{j} (0)|}}\cdot 
\end{equation}
Now, by the separation of variables and Fubini's theorem, we easily get:
\begin{equation} \label{usefac-bis}
\Vert C_{\varphi}\Vert\leq \prod_{j=1}^\infty \Vert C_{\varphi_j}\Vert<\infty \, .
\end{equation} 
As $\sum_{j\geq 1}|\varphi_{j}(0)| < \infty$, by hypothesis, the infinite product 
\begin{displaymath} 
\prod_{j\geq 1}\sqrt{\frac{1+|\varphi_{j} (0)|}{1-|\varphi_{j} (0)|}} 
\end{displaymath} 
converges and, in view of \eqref{usefac} and \eqref{usefac-bis}, $C_\phi$ is bounded.
\end{proof}
\smallskip

We also have the following useful fact.
\begin{lemme}\label{trans} 
The automorphisms of  $\Omega$ act transitively on $\Omega$ and define bounded composition operators on $H^{2}(\Omega)$.
\end{lemme}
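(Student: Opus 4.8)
The plan is to build the automorphisms explicitly out of M\"obius automorphisms of the disk acting coordinatewise, and then to read off both transitivity and boundedness directly from Proposition~\ref{simple}. Fix a point $a = (a_j)_{j\geq 1} \in \Omega$, so that $\sum_{j\geq 1} |a_j| < \infty$. For each $j$ let $\phi_{a_j}$ denote the disk automorphism $\phi_{a_j}(w) = \frac{w + a_j}{1 + \overline{a_j}\, w}$, which sends $0$ to $a_j$, and define $\Phi_a \colon z \mapsto (\phi_{a_j}(z_j))_{j\geq 1}$. Since $\phi_{a_j}(0) = a_j$ and $\sum_j |a_j| < \infty$, Proposition~\ref{simple} applies verbatim with $\varphi_j = \phi_{a_j}$: the map $\Phi_a$ sends $\Omega$ into $\Omega$ and is a symbol for $H^2(\Omega)$, with $\|C_{\Phi_a}\| \leq \prod_j \sqrt{(1 + |a_j|)/(1 - |a_j|)} < \infty$ by \eqref{usefac-bis}.

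To see that $\Phi_a$ is genuinely an automorphism, I would exhibit its inverse coordinatewise. The inverse of $\phi_{a_j}$ in $\mathrm{Aut}(\D)$ is $\phi_{a_j}^{-1}(w) = \frac{w - a_j}{1 - \overline{a_j}\, w}$, which is again a disk automorphism with $\phi_{a_j}^{-1}(0) = -a_j$. Hence the coordinatewise map $\Psi_a \colon z \mapsto (\phi_{a_j}^{-1}(z_j))_{j\geq 1}$ satisfies $\sum_j |\phi_{a_j}^{-1}(0)| = \sum_j |a_j| < \infty$, so Proposition~\ref{simple} again shows that $\Psi_a$ maps $\Omega$ into $\Omega$ and is a symbol. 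Since $\phi_{a_j} \circ \phi_{a_j}^{-1} = \phi_{a_j}^{-1} \circ \phi_{a_j} = \mathrm{id}_\D$ for every $j$, we obtain $\Phi_a \circ \Psi_a = \Psi_a \circ \Phi_a = \mathrm{id}_\Omega$, so $\Phi_a$ is a biholomorphic self-map of $\Omega$, i.e. an automorphism, and both $\Phi_a$ and $\Phi_a^{-1} = \Psi_a$ induce bounded composition operators.

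Transitivity is then immediate from the group structure. Noting that $\Phi_a(0) = a$ and $\Phi_a^{-1}(a) = \Psi_a(a) = 0$, given any $a, b \in \Omega$ the automorphism $\Phi_b \circ \Phi_a^{-1}$ sends $a \mapsto 0 \mapsto b$. As every such composite is again an automorphism of $\Omega$ defining a bounded composition operator, being a product of two operators of the above type, the claim follows.

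The step requiring the most care is the verification that $\Phi_a$ is a bona fide automorphism of the infinite-dimensional domain $\Omega$, and in particular that \emph{both} $\Phi_a$ and its inverse land in $\Omega$. This is precisely where the condition $a \in \Omega \subset \ell^1$ is used twice: once to control $\sum_j |\phi_{a_j}(0)|$ and once to control $\sum_j |\phi_{a_j}^{-1}(0)| = \sum_j |a_j|$, so that Proposition~\ref{simple} can be invoked in both directions. The holomorphy of $\Phi_a$ and $\Psi_a$ on $\Omega$ is not itself an obstacle, since they act separately on each coordinate by analytic disk automorphisms.
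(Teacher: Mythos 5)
Your proof is correct and follows essentially the same route as the paper: coordinatewise M\"obius automorphisms $z \mapsto (\Phi_{a_j}(z_j))_j$, the self-map and boundedness properties read off from Proposition~\ref{simple} (the paper notes additionally that the Schwarz lemma gives $|\Phi_{a_j}(z_j)+a_j|\leq 2|z_j|$, which is the same estimate underlying that proposition), the inverse given by the M\"obius maps with opposite parameter, and transitivity by routing $a \mapsto 0 \mapsto b$. Your slightly more explicit verification that the inverse also satisfies the $\ell^1$ condition is a welcome detail but not a departure from the paper's argument.
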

\begin{proof}  Let  $a=(a_j)_j \in \Omega$ and let $\Psi_a \colon \Omega\to \C^\infty$ be defined by:
\begin{displaymath} 
\Psi_{a}(z)=\big(\Phi_{a_j}(z_j)\big)_{j\geq 1}
\end{displaymath} 
where in general $\Phi_u \colon \D\to \D$ is defined by $\Phi_{u}(z)= (z - u) / (1-\overline{u}z)$. The Schwarz lemma gives 
$|\Phi_{a_j}(z_j) + a_j|\leq 2|z_j|$, and shows that $\Psi_a$ maps $\Omega$ to itself. Clearly, $\Psi_a$ is an automorphism of $\Omega$ with inverse 
$\Psi_{-a}$ and $\Psi_{a}(a)=0$. The fact that the composition operator $C_{\Psi_a}$ is bounded on $H^{2}(\Omega)$ is a consequence of 
Proposition~\ref{simple}. 
\end{proof}
%



\section{Spectrum of compact composition operators}

We begin with the following definition, following \cite{LEF}.
\begin{definition}
Let $\phi \colon \Omega \to \Omega$ be a truly infinite-dimensional symbol. We say that $\phi$ is \emph{compact} if $\overline{\varphi(\Omega)}$ is a 
compact subset of $\Omega$. 
\end{definition}

We then have the following result.
\goodbreak
\begin{lemme}\label{compact} 
If $\varphi \colon \Omega\to \Omega$ is a \emph{compact mapping}, then: \par\smallskip
$1)$ $C_\varphi \colon H^{2}(\Omega)\to H^{2}(\Omega)$ is bounded and moreover compact. \par\smallskip

$2)$ If $a\in \Omega$ a fixed point of $\varphi$,  $\varphi'(a)\in \mathcal{L}(\ell^1)$ is a compact operator.
\end{lemme}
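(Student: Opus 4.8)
The plan is to extract from the compactness of $\mathcal{K}:=\overline{\varphi(\Omega)}\subseteq\Omega$ three quantitative facts, then run a reproducing-kernel argument for~$1)$ and a vector-valued Cauchy formula for~$2)$. Writing $K_w(z)=\prod_{j\ge1}(1-z_j\overline{w}_j)^{-1}$ for the reproducing kernel of $H^2(\Omega)$, so that $\|K_w\|^2=\sum_{\alpha\ge0}|w^\alpha|^2=\prod_{j\ge1}(1-|w_j|^2)^{-1}$, I would first record: (i) since $\mathcal{K}$ is norm-compact in $\ell^1$ its tails are uniformly small, $\sup_{w\in\mathcal{K}}\sum_{j>N}|w_j|\to0$; (ii) hence $\rho_j:=\sup_{w\in\mathcal{K}}|w_j|$ tends to $0$, and since each $\rho_j<1$ (a maximum of $|w_j|$ over a compact subset of $\Omega$), one gets $\rho^\ast:=\sup_j\rho_j<1$; (iii) from (i)--(ii) the series $\log\|K_w\|^2=\sum_j\big(-\log(1-|w_j|^2)\big)$ converges uniformly on $\mathcal{K}$, so $B:=\sup_{w\in\mathcal{K}}\|K_w\|<\infty$. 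All three properties persist on $\overline{\mathrm{conv}}(\mathcal{K})$, since tail-sums and coordinate moduli are convex.

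For the boundedness in $1)$ I would use the pointwise estimate $|f(w)|=|\langle f,K_w\rangle|\le\|f\|\,\|K_w\|$: for $f\in H^2(\Omega)$ the function $f\circ\varphi$ is analytic and bounded on $\Omega$ with $\|f\circ\varphi\|_\infty\le B\,\|f\|$, because $\varphi(\Omega)\subseteq\mathcal{K}$. Since $M_2(r,g)\le\|g\|_\infty$ gives the contractive embedding $H^\infty(\Omega)\hookrightarrow H^2(\Omega)$, this yields $\|C_\varphi f\|_2\le\|f\circ\varphi\|_\infty\le B\,\|f\|$, so $C_\varphi$ is bounded with $\|C_\varphi\|\le B$.

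For compactness I would use the Hilbert-space criterion that it suffices to send weakly null sequences to norm-null ones. If $\|f_n\|\le1$ and $f_n\to0$ weakly, then $f_n(w)=\langle f_n,K_w\rangle\to0$ pointwise, and as above $\|C_\varphi f_n\|_2\le\sup_{w\in\mathcal{K}}|f_n(w)|$; it remains to upgrade pointwise to uniform convergence on $\mathcal{K}$. This is the crux, an infinite-dimensional Vitali/Montel step, which I would settle by proving that the unit ball of $H^2(\Omega)$ is (uniformly Lipschitz, hence) equicontinuous on $\mathcal{K}$ for the $\ell^1$ metric. A one-variable Cauchy estimate does this: for $\zeta\in\overline{\mathrm{conv}}(\mathcal{K})$ and $\delta_0=(1-\rho^\ast)/2$, replacing the $j$-th coordinate by $\lambda$ with $|\lambda-\zeta_j|=\delta_0$ keeps $\|K_{\zeta^{(j,\lambda)}}\|=\|K_\zeta\|\big(\tfrac{1-|\zeta_j|^2}{1-|\lambda|^2}\big)^{1/2}$ bounded uniformly in $j$ (property (ii) is exactly what prevents the Cauchy radius from collapsing as $j\to\infty$), whence $\sup_j|\partial_jf(\zeta)|\le L\,\|f\|$ and therefore $|f(w)-f(w')|\le L\,\|f\|\,\|w-w'\|_1$ for $w,w'\in\mathcal{K}$. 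Arzel\`a--Ascoli then turns pointwise into uniform convergence on $\mathcal{K}$, so $\sup_{\mathcal{K}}|f_n|\to0$ and $\|C_\varphi f_n\|\to0$; hence $C_\varphi$ is compact.

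For $2)$ I would not actually need the fixed-point hypothesis. Fix $a\in\Omega$ and $r_0>0$ with the $\ell^1$-ball $B(a,r_0)\subseteq\Omega$. For $\|h\|_1\le1$ the map $t\mapsto\varphi(a+th)$ is $\ell^1$-valued holomorphic on $\{|t|<r_0\}$, so the vector-valued Cauchy formula gives, with $\rho_0=r_0/2$, the representation $\varphi'(a)h=\frac{1}{2\pi\rho_0}\int_0^{2\pi}e^{-i\theta}\,\varphi(a+\rho_0e^{i\theta}h)\,d\theta$. Every integrand value lies in $\T\,\mathcal{K}=\{\lambda w:|\lambda|=1,\ w\in\mathcal{K}\}$, so the average lies in $\rho_0^{-1}\,\overline{\mathrm{conv}}(\T\,\mathcal{K})$. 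The set $\T\,\mathcal{K}$ is compact, hence so is its closed convex hull (Mazur), and therefore $\varphi'(a)$ maps the unit ball of $\ell^1$ into a compact set: $\varphi'(a)\in\mathcal{L}(\ell^1)$ is compact. The only genuinely delicate point throughout is the uniform Cauchy estimate of the third paragraph; everything else is bookkeeping with the three properties of $\mathcal{K}$.
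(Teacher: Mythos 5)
Your proof is correct, and its skeleton coincides with the paper's: for $1)$, both reduce compactness to showing that a weakly null sequence in the unit ball of $H^{2}(\Omega)$ converges uniformly on the compact set $\mathcal{K}=\overline{\varphi(\Omega)}$, and for $2)$ both represent the derivative as a circular average of values of $\varphi$ and conclude from compactness of the range. The implementations differ in ways worth noting. For $1)$, the paper invokes a result from \cite{CHAE} (locally bounded families of holomorphic functions are locally equi-Lipschitz) to obtain an Ascoli--Montel principle; you prove the needed equi-Lipschitz bound by hand, via the reproducing kernel $K_w$ and a one-variable Cauchy estimate on coordinate circles of radius $(1-\rho^\ast)/2$, with the passage to $\overline{\mathrm{conv}}(\mathcal{K})$ correctly inserted so that the mean-value inequality can be applied along segments (your observation that $\rho^\ast<1$ is what keeps the Cauchy radii from collapsing is exactly the right point); you also make the boundedness $\Vert C_\varphi\Vert\leq B$ explicit, which the paper's proof leaves implicit. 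For $2)$, the paper first moves the fixed point to $0$ by conjugating with the automorphisms of Lemma~\ref{trans} and then uses the Fourier-coefficient formula \eqref{bohr} on the ball $\Vert z\Vert_1<1$, compressing the convexity step into the assertion that $A(B)$ is totally bounded; you work directly at an arbitrary point $a$ with the directional Cauchy formula, correctly observing that the fixed-point hypothesis is superfluous (the paper makes the same remark but still routes through the automorphisms), and you spell out the Mazur step, namely the compactness of $\overline{\mathrm{conv}}(\T\,\mathcal{K})$. What your version buys is self-containedness, quantitative constants, and a slightly more general statement of $2)$; what the paper's version buys is brevity. One shared point of imprecision, inherited from the paper's framework rather than introduced by you: the step $\Vert g\Vert_{2}\leq\Vert g\Vert_\infty$ presupposes that a bounded analytic function on $\Omega$ belongs to $H^{2}(\Omega)$ as the paper defines it (i.e. admits a monomial expansion with square-summable coefficients); the paper uses this identification without comment, and your proof is no worse off on that score.
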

\begin{proof} $1)$ follows from a H.~Schwarz type criterion via an Ascoli-Montel type theorem: every sequence $(f_n)$ of $H^{2}(\Omega)$ bounded 
in norm contains a subsequence which converges uniformly on compact subsets of $\Omega$. Indeed, we have the following (\cite{CHAE}, chap.~17, p.~274): 
if $A$ is a locally bounded set of holomorphic functions on $\Omega$, then $A$ is locally equi-Lipschitz, namely every point $a\in \Omega$ has a neighourhood 
$U\subset \Omega$ such that:
\begin{displaymath} 
z,w\in U \quad \text{and}\quad f \in A \quad\Longrightarrow \quad|f(z) - f(w)|\leq C_{A,U} \, \Vert z - w\Vert \, .
\end{displaymath} 
The Ascoli-Montel theorem easily follows from this. Then, if $f_n\in H^{2}(\Omega)$ converges weakly to $0$, it converges uniformly to $0$ on compact subsets
of $\Omega$; in particular on $\overline{\varphi(\Omega)}$. This means that $\Vert C_{\varphi}(f_n)\Vert_\infty =\Vert f_n \circ\varphi\Vert_\infty \to 0$, 
implying $\Vert f_n\circ \varphi\Vert_2 \to 0$ and the compactness of $C_\varphi$. \par

Actually, $C_\varphi$ is compact on every Hardy space $H^{p}(\Omega),\ 1\leq p\leq\infty$. This observation will be useful later on.\par
\smallskip

For $2)$, we may indeed dispense ourselves with the invariance of $a$, and force $a=0$ to be a fixed point of $\varphi$. Indeed, we can replace $\varphi$ by 
$\psi = \Psi_b\circ \varphi\circ \Psi_a$ where $b=\varphi (a)$ is arbitrary, and use Lemma~\ref{trans} as well as the ideal property of compact linear operators. 
We set $A=\varphi'(0)$.  Expanding each coordinate $\varphi_j$ of $\varphi$ in a series of homogeneous polynomials, we may write (since $\varphi(0)=0$):
\begin{displaymath} 
\varphi(z)=\sum_{|\alpha|=1} c_\alpha z^\alpha + \sum_{s=2}^\infty \bigg(\sum_{|\alpha|=s} c_\alpha z^\alpha\bigg) 
= A(z)+\sum_{s=2}^\infty \bigg(\sum_{|\alpha|=s} c_\alpha z^\alpha\bigg) \, ,
\end{displaymath} 
where $c_\alpha=(c_{\alpha,j})_{j\geq 1}\in \C^\infty$. We clearly have (looking at the Fourier series of $\varphi (z \, \e^{i\theta})$): 
\begin{equation}\label{bohr} 
\Vert z\Vert_1<1 \quad \Longrightarrow \quad z\in \Omega \quad \Longrightarrow \quad 
A(z) = \frac{1}{2\pi}\int_{0}^{2\pi}\varphi(z \, \e^{i\theta}) \, \e^{-i\theta} \, d\theta \, .
\end{equation}
Since $\varphi$ is compact, this clearly implies, with $B$ the open unit ball of $\ell^1$, that $A(B)$ is totally bounded, proving the compactness of $A$.   
\end{proof} 

The following extension of results of \cite{MCCL}, then \cite{ARGALI} and \cite{CLA}, which themselves extend a theorem of 
G.~K\"onigs (\cite{Shap-livre}, p.~93) will play an essential role for lower bounds of approximation numbers. 
\begin{theoreme}\label{plusbluff} 
Let $\varphi \colon \Omega\to\Omega$ be a compact symbol. Assume there is $a\in \Omega$ such that $\varphi(a)=a$ and that  
$\varphi'(a)\in \mathcal{L}(\ell^1)$ is \emph{injective}. Then, the spectrum of $C_\varphi \colon H^{2}(\Omega)\to H^{2}(\Omega)$ is exactly formed by 
the numbers $\lambda^\alpha$, $\alpha \in \N^{(\infty)}$, and $0, 1$, where $(\lambda_j)_{j \geq 1}$ denote the eigenvalues of $A:=\varphi'(a)$ and:
\begin{displaymath} 
\qquad \qquad \quad \lambda^\alpha =\prod_{j \geq 1}\lambda_{j}^{\alpha_j} \qquad \text{if} \quad \alpha=(\alpha_j)_{j \geq 1} \in \N^{(\infty)} \, .
\end{displaymath} 
\end{theoreme}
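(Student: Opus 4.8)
The plan is to reduce to a fixed point at the origin, then combine the compactness of $C_\varphi$ with the homogeneous grading of $H^2(\Omega)$ and the tensor-product spectrum lemma. First I would invoke Lemma~\ref{trans}: replacing $\varphi$ by $\Psi_{b}\circ\varphi\circ\Psi_{a}$ conjugates $C_\varphi$ by an invertible composition operator, leaving $\sigma(C_\varphi)$ unchanged and transforming $\varphi'(a)$ into a similar operator, hence with the same eigenvalues $\lambda_j$. So I assume $\varphi(0)=0$ and set $A=\varphi'(0)$. By Lemma~\ref{compact}, $A$ is compact and, by hypothesis, injective, so its nonzero eigenvalues $\lambda_j$ form a sequence tending to $0$; using the compactness of $\overline{\varphi(\Omega)}$ in $\Omega$ one checks $\rho(A)<1$, whence $|\lambda_j|<1$ and every $\lambda^\alpha$, $\alpha\in\N^{(\infty)}$, is nonzero, these values accumulating only at $0$. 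Since $C_\varphi$ is compact (Lemma~\ref{compact}) and of infinite rank, Riesz theory gives $\sigma(C_\varphi)=\{0\}\cup\sigma_p(C_\varphi)$ with $0\in\sigma(C_\varphi)$, while $1=\lambda^0$ is the eigenvalue of the constant eigenfunction. Everything thus reduces to showing that the nonzero eigenvalues of $C_\varphi$ are exactly the numbers $\lambda^\alpha$.

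For the inclusion $\sigma_p(C_\varphi)\setminus\{0\}\subseteq\{\lambda^\alpha\}$, I would use that $\varphi(0)=0$ makes $C_\varphi$ compatible with the grading $H^2(\Omega)=\bigoplus_{s\geq0}P_s$ into spaces $P_s$ of homogeneous polynomials of degree $s$. Writing $\varphi=A+\sum_{s\geq2}\Phi_s$ with $\Phi_s$ homogeneous of degree $s$, the lowest-order term of $g\circ\varphi$ for $g\in P_N$ is $g\circ A\in P_N$, so the operator induced by $C_\varphi$ on $P_N$ is $S_N\colon g\mapsto g\circ A$. If $C_\varphi f=\mu f$ with $f\neq0$, $\mu\neq0$, let $N$ be the order of vanishing of $f$ at $0$ and $f_N\in P_N$ its lowest homogeneous part; matching the degree-$N$ parts of $f\circ\varphi=\mu f$ yields $S_N f_N=\mu f_N$ with $f_N\neq0$. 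Identifying $S_N$ with the symmetric part of $(A')^{\otimes N}$, any such eigenvector is an eigenvector of $(A')^{\otimes N}$, so by the Schechter tensor-product spectrum lemma $\mu\in\sigma\bigl((A')^{\otimes N}\bigr)\subseteq\sigma(A')^{N}=\sigma(A)^{N}=\{0\}\cup\{\lambda_{j_1}\cdots\lambda_{j_N}\}$. Being nonzero, $\mu=\lambda^\alpha$ for some $|\alpha|=N$, as desired.

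For the reverse inclusion each $\lambda^\alpha$ must be produced as an eigenvalue. The natural route is a Königs–Poincaré linearization: seek a holomorphic $\Theta\colon\Omega\to\ell^1$ with $\Theta(0)=0$, $\Theta'(0)=I$ and $\Theta\circ\varphi=A\circ\Theta$; then $C_\Theta C_A=C_\varphi C_\Theta$, so whenever $C_A h=\lambda^\alpha h$ the function $h\circ\Theta$ is an eigenfunction of $C_\varphi$ for $\lambda^\alpha$ (provided it is a nonzero element of $H^2(\Omega)$). For the linear operator $C_A$ the eigenfunctions are explicit: if $\ell_j$ is a left eigenfunctional of $A$, i.e. $\ell_j\circ A=\lambda_j\ell_j$, then $\prod_j\ell_j^{\alpha_j}$ satisfies $C_A\bigl(\prod_j\ell_j^{\alpha_j}\bigr)=\bigl(\prod_j\lambda_j^{\alpha_j}\bigr)\prod_j\ell_j^{\alpha_j}=\lambda^\alpha\prod_j\ell_j^{\alpha_j}$. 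Equivalently, one may build the eigenfunction directly by the triangular recursion, fixing the $S_N$-eigenvector $f_N$ for $\lambda^\alpha$ and solving $(S_{N+k}-\lambda^\alpha)f_{N+k}=R_{N+k}(f_N,\dots,f_{N+k-1})$ degree by degree, which is possible as soon as $\lambda^\alpha\notin\sigma(S_{N+k})$ for every $k\geq1$.

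The main obstacle is precisely this lower bound, and it is genuinely infinite-dimensional in two ways. First, the recursion (equivalently the construction of $\Theta$) is obstructed at \emph{resonances}, where $\lambda^\alpha=\lambda^\beta$ for some $|\beta|>|\alpha|$; exact linearization may then fail, so I would instead argue by continuity, using that non-resonant compact contractions are dense and that the nonzero spectral values of compact operators are stable under small perturbations, to deduce $\lambda^\alpha\in\sigma(C_\varphi)$ in general, the value then being an eigenvalue because $C_\varphi$ is compact. Second, and more delicate, one must show that the formally constructed series $\sum_{s\geq N}f_s$ (or $h\circ\Theta$) actually converges in $H^2(\Omega)$: this is where the choice $\Omega=\D^\infty\cap\ell^1$ rather than $\ell_2$ is crucial, the $\ell^1$ geometry together with the boundedness estimates behind Proposition~\ref{simple} furnishing the control needed to sum over the infinitely many variables and over the eigenvalues $\lambda_j\to0$. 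Establishing these convergence estimates, and checking the boundedness and compactness of the auxiliary operators $S_N$ on the infinite-dimensional spaces $P_N$, is the technical heart of the argument.
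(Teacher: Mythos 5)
Your reduction to a fixed point at $0$ and your proof of the inclusion $\sigma(C_\varphi)\setminus\{0,1\}\subseteq\{\lambda^\alpha\}$ are essentially correct and close to the paper's own argument: the paper likewise compares homogeneous terms of $f\circ\varphi=\mu f$ (phrased as an induction killing each symmetric Taylor coefficient $L_n$, via the identity $\overline{L_n}\,\big(\varphi'(0)^{\otimes n}-\mu I\big)=0$ and the invertibility of $\varphi'(0)^{\otimes n}-\mu I$ supplied by the Schechter tensor-product spectrum lemma), and your ``lowest nonzero homogeneous part'' variant is equivalent to that induction, the paper even noting that one can avoid the transposes $(A')^{\otimes N}$ you invoke.

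The genuine gap is the reverse inclusion $\{\lambda^\alpha\}\subseteq\sigma(C_\varphi)$. The route you propose --- K\"onigs--Poincar\'e linearization $\Theta\circ\varphi=A\circ\Theta$, or the triangular degree-by-degree recursion, patched at resonances by a density/perturbation argument --- is precisely what you concede you cannot complete: convergence of the formal series in $H^2(\Omega)$ is never established, resonances $\lambda^\alpha=\lambda^\beta$ genuinely obstruct linearization, and the perturbation step would require $\Vert C_{\varphi_\varepsilon}-C_\varphi\Vert\to0$ in operator norm, which does not follow from perturbing the symbol (note also that you never prove even the base fact that each single $\lambda_j$ is an eigenvalue; it too is deferred to the unproven linearization). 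The paper sidesteps all of this with a short multiplicative trick you are missing. Since $\varphi$ is a compact mapping, $C_\varphi$ is compact on \emph{every} $H^p(\Omega)$, $1\leq p\leq\infty$. The non-surjectivity argument of Aron--Galindo--Lindstr\"om (step 2 of their proof) is valid verbatim on $H^p(\Omega)$, and for a compact operator non-surjectivity of $C_\varphi-\lambda_i I$ forces non-injectivity; so, taking $p=2m$, one obtains nonzero $f_i\in H^{2m}(\Omega)$ with $f_i\circ\varphi=\lambda_i f_i$ for $1\leq i\leq m$. Then for $\lambda^\alpha=\lambda_1\cdots\lambda_m$ (repetitions allowed), H\"older's inequality shows that $f=\prod_{i=1}^m f_i$ is a nonzero element of $H^2(\Omega)$ satisfying $f\circ\varphi=\lambda^\alpha f$. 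This replaces the entire linearization machinery --- no non-resonance hypothesis, no convergence estimate, no perturbation --- at the mild cost of working on the scale of spaces $H^p(\Omega)$ rather than on $H^2(\Omega)$ alone. Without this (or some other complete construction of eigenfunctions for the products $\lambda^\alpha$), your argument establishes only one of the two inclusions claimed in the theorem.
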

\begin{proof} 
This is proved in \cite{ARGALI} for the unit ball $B_E$ of an arbitrary Banach space $E$ and for the space $H^{\infty}(B_E)$, in four steps which are the 
following:  \par\smallskip
$1.$ If $\varphi(B_E)$ lies strictly inside $B_E$ (namely if $\varphi(B_E)\subseteq rB_E$ for some $r<1$), in particular when $\varphi$ is compact, 
$\varphi$  has a unique fixed point $a\in B_E$, according to a theorem of Earle and Hamilton. \par\smallskip

$2.$ The spectrum of $C_\varphi$ contains the numbers $\lambda$ where $\lambda$ is an eigenvalue of $\varphi'(a)$ or $\lambda=0,1$. \par\smallskip

$3.$ It is then proved that the spectrum of $C_\varphi$ contains  the numbers $\lambda^\alpha$ and $0, 1$.  \par\smallskip

$4.$ It is finally proved that  spectrum of $C_\varphi$ is contained in  the numbers $\lambda^\alpha$ and $0, 1$. \par\medskip

Here, handling with the domain $\Omega$, we see that:  \par\smallskip

$1.$ True or not for $\Omega$, the Earle-Hamilton theorem is not needed since we will force, by a change of the compact symbol $\varphi$ in another 
compact symbol $\psi=\Psi_b\circ \varphi\circ \Psi_a$, the point $0$ to be a fixed point. Moreover $A=\psi'(0)$ is injective if $\varphi'(a)$ is, since $\Psi'_a$ 
and $\Psi'_b$ are invertible.  \par\smallskip

$2.$ The second step (non-surjectivity) is valid for any domain and for $H^{2}(\Omega)$, or $H^{p}(\Omega)$, in exactly the same way.  \par\smallskip

$3.$ The third step consists of proving $\{\lambda^{\alpha}\}\subseteq \sigma(C_\varphi)$. \par\smallskip

For that purpose, assume that $\lambda^\alpha=\prod_{l=1}^m \lambda_l \neq 0$ with $\lambda_l$ an eigenvalue of $\varphi'(0)$ and  with repetitions 
allowed. As we already mentioned, under the assumption of compactness of $\varphi$, $C_\varphi$ is compact on $H^{p}(\Omega)$ as well, for any $p\geq 1$. 
We take here $p=2m$. Step $2$ provides us with non-zero functions $f_{i}\in H^{p}(\Omega)$ such that $f_i\circ \varphi=\lambda_i f_i$, $1\leq i\leq m$, 
since for the compact operator $C_\varphi \colon H^p\to H^p$, non-surjectivity implies non-injectivity. Let $f=\prod_{1\leq i\leq m} f_i$. Then, using the 
integral representation of the norm and the H\"older inequality, we see that $f\in H^{2}(\Omega)$, $f\neq 0$ and $f\circ \varphi=\lambda^{\alpha}f$, 
proving our claim. \par\smallskip

$4.$ The fourth step is valid as well, with a slight simplification: we have to show that, if $\mu\neq 1$ is not of the form $\lambda^\alpha$, then 
$C_\varphi - \mu I$ is injective. Let $f\in H^{2}(\Omega)$ satisfying $f\circ \varphi=\mu f$ and let:
\begin{displaymath} 
f(z)=\sum_{m=0}^\infty \frac{d^{m} f (0)}{m!}(z^m) 
\end{displaymath} 
be the Taylor expansion of $f$ about $z=0$ (observe that $\Omega$ is a Reinhardt domain). As usual,  $d^{m}f(0) =: L_m$ is an $m$-linear symmetric form on 
$F=\ell^1$ and the notation $L_{m}(z^m)$ means $L_{m}(z, z, \ldots, z)$. 

Observe that $L_m$ can be isometrically identified with an element (denoted $\overline{L_m}$) of $\mathcal{L}(F^{\otimes n})$ defined by the formula:
\begin{displaymath} 
\overline{L_m}(x_1\otimes\cdots \otimes x_n) = L_{m}(x_1,\ldots, x_m) \, .
\end{displaymath} 
We will prove by induction that $L_{n}=0$ for each $n$. For this, we can avoid the appeal to transposes of \cite{ARGALI} as follows: if the result holds for 
$L_{m}$ with $m<n$, one gets (comparing the terms in $z^n$ in both members of $f\circ \varphi=\mu f$):
\begin{equation}\label{onegets} 
\mu A=A\circ B \quad \text{where} \quad A=\overline{L_n} \quad \text{and}\quad B=\varphi'(0)^{\otimes n} \, . 
\end{equation}
That is $A (B -\mu I)=0$ where $I$ is the identity map of $F^{\otimes n}$. Now, $B-\mu I$ in invertible in $\mathcal{L}(F)$ by 
Lemma~\ref{plusbluff}, so that $A=A(B-\mu I)(B-\mu I)^{-1}=0$. \par\smallskip

The proof is complete. 
\end{proof}
\smallskip

The following theorem summarizes and exploits the preceding theorem. Possibly, some restrictions can be removed, and we could only assume the compactness of 
$C_\varphi$, not of $\varphi$ itself. After all, in dimension one, there are symbols $\varphi$ with $\Vert \varphi\Vert_\infty=1$ for which 
$C_\varphi \colon H^2\to H^2$ is compact.

\begin{theoreme}\label{geninf} 
Let $\varphi \colon \Omega\to \Omega$ be a truly infinite-dimensional \textnormal{compact}  mapping of $\Omega$. Then: \par\smallskip
$1)$ $C_\varphi:H^{2}(\Omega)\to H^{2}(\Omega)$ is bounded and even compact.  \par\smallskip

$2)$ $A=\varphi'(0)$ is  compact.  \par\smallskip

$3)$ No $\delta>0$ can exist such that $a_{n}(C_\varphi)\leq C\, \e^{-c \, n^\delta}$\ for all $n\geq 1$. More precisely, the numbers $a_n$ satisfy:
\begin{equation}\label{mopr}
\sum_{n\geq 1} \frac{1}{\log^{p}(1/a_n)}=\infty \quad \text{for all}\quad p<\infty \, . 
\end{equation} 
\end{theoreme}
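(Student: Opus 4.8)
The plan is to read off $1)$ and $2)$ from Lemma~\ref{compact}, and to obtain $3)$ by a finite\nobreakdash-dimensional reduction that plays $\varphi$ off against the lower bound of \cite{BLQR} recalled in the introduction, letting the dimension $d$ tend to infinity. Assertion $1)$ is exactly part~$1)$ of Lemma~\ref{compact}. For $2)$, since $\varphi$ is truly infinite\nobreakdash-dimensional there is $a\in\Omega$ with $\varphi'(a)$ injective, and replacing $\varphi$ by $\tilde\varphi=\varphi\circ\Psi_{-a}$ (the $\Psi_a$ of Lemma~\ref{trans}, with $\Psi_{-a}(0)=a$) we may assume that $A:=\varphi'(0)$ is injective, because $\Psi'_{-a}(0)$ is invertible; the mapping stays compact as $\tilde\varphi(\Omega)=\varphi(\Omega)$, and Lemma~\ref{compact} then gives that $A$ is compact. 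Moreover $C_{\tilde\varphi}=C_{\Psi_{-a}}C_\varphi$, so by the ideal property any lower bound $a_n(C_{\tilde\varphi})\ge c_d\,\e^{-C_d n^{1/d}}$ proved below transfers to $\varphi$ up to the finite factor $\|C_{\Psi_{-a}}\|$. Hence I work with $\tilde\varphi$, still written $\varphi$, with $A=\varphi'(0)$ injective.

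The elementary engine is that injectivity of $A$ yields, for every $d$, an invertible $d\times d$ minor. Indeed the columns $Ae_1,\dots,Ae_d\in\ell^1$ are linearly independent (a finite combination $\sum_j t_j Ae_j=A(\sum_j t_j e_j)$ vanishes only when $\sum_j t_j e_j=0$), so the $\infty\times d$ array $(A_{i,j})_{i\ge1,\,1\le j\le d}$ with $A_{i,j}=(Ae_j)_i$ has rank $d$ and thus possesses $d$ independent rows $i_1<\cdots<i_d$; that is, $M=(A_{i_k,j})_{k,j=1}^d$ is invertible. I then introduce the \emph{asymmetric} coordinate inclusion and projection $\iota\colon\D^d\to\Omega$, $\iota(w)=(w_1,\dots,w_d,0,0,\dots)$, and $\pi\colon\Omega\to\D^d$, $\pi(z)=(z_{i_1},\dots,z_{i_d})$, and set $\psi=\pi\circ\varphi\circ\iota\colon\D^d\to\D^d$. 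This $\psi$ is a compact self\nobreakdash-map (its range lies in $\pi(\overline{\varphi(\Omega)})$, compact in $\D^d$) whose differential at $0$ equals $M$; being invertible, $\psi$ is a local biholomorphism at $0$, so $\psi(\D^d)$ has non\nobreakdash-empty interior and $\psi$ is a genuinely $d$\nobreakdash-dimensional compact symbol on the polydisk.

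It remains to compare $C_\varphi$ with $C_\psi$. On the monomial bases, $C_\iota\colon H^2(\Omega)\to H^2(\D^d)$, $f\mapsto f\circ\iota$, is a norm\nobreakdash-one coordinate projection, while $C_\pi\colon H^2(\D^d)\to H^2(\Omega)$, $g\mapsto g\circ\pi$, is an isometry, since distinct $\gamma\in\N^d$ give distinct orthonormal monomials $\prod_k z_{i_k}^{\gamma_k}$. As $g\circ\pi\circ\varphi\circ\iota=g\circ\psi$, one has the factorization $C_\psi=C_\iota\,C_\varphi\,C_\pi$, whence $a_n(C_\psi)\le\|C_\iota\|\,a_n(C_\varphi)\,\|C_\pi\|=a_n(C_\varphi)$. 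Now $\D^d$ is a bounded symmetric domain, $C_\psi$ is compact and $\psi(\D^d)$ has non\nobreakdash-empty interior, so part~$1)$ of the theorem of \cite{BLQR} gives $c_d,C_d>0$ with $a_n(C_\psi)\ge c_d\,\e^{-C_d n^{1/d}}$, hence $a_n(C_\varphi)\ge c_d\,\e^{-C_d n^{1/d}}$ for every $d\ge1$ and every $n$. (Note this route sidesteps the spectral picture entirely: it does not require $A$ to have any eigenvalue.) Finally, since $a_n\to0$, for large $n$ we get $\log(1/a_n)\le C_d'\,n^{1/d}$; given $p$, I pick an integer $d>p$ and obtain $\sum_n\log^{-p}(1/a_n)\ge (C_d')^{-p}\sum_n n^{-p/d}=\infty$, which is \eqref{mopr}. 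Conversely, a bound $a_n\le C\,\e^{-c n^\delta}$ would force $\log(1/a_n)\ge c'n^\delta$ for large $n$, making the same series converge for $p>1/\delta$ — contradicting \eqref{mopr}.

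The main obstacle is the construction of $\psi$ as a \emph{compact, truly $d$\nobreakdash-dimensional} self\nobreakdash-map of the polydisk: one cannot use a coordinate inclusion on both sides, since the principal $d\times d$ corner of $A$ may be singular (think of a shift), and it is exactly the injectivity of $A$, through the invertible minor $M$, that makes $\psi'(0)$ invertible and hence guarantees the non\nobreakdash-empty interior needed to apply \cite{BLQR}. The accompanying check — that the two transfer maps $C_\iota,C_\pi$ are bounded (indeed isometric), so that the ideal property legitimately carries the $d$\nobreakdash-dimensional lower bound back to $C_\varphi$ — is the other point demanding care.
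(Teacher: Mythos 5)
Your proposal is correct, but it takes a genuinely different route from the paper. The paper proves $3)$ spectrally: Theorem~\ref{plusbluff} (the Aron--Galindo--Lindstr\"om/K\"onigs-type description, adapted to $\Omega$) identifies the eigenvalues $\lambda^\alpha$ of $C_\varphi$ built from the eigenvalues $(\lambda_j)$ of $A=\varphi'(0)$; Lemma~\ref{cruci} shows, by a $q$-dimensional lattice-point versus integral comparison with $q=2p$, that the rearrangement $(\delta_n)$ of $(\lambda^\alpha)$ satisfies $\sum_n \log^{-p}(1/\delta_n)=\infty$; and Weyl's inequality (Theorem~\ref{hermann}, via $|\lambda_{2n}|^2\leq a_1a_n$) transfers this to the $a_n$. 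You instead compress $C_\varphi$ to a $d$-dimensional composition operator: injectivity of $A$ yields an invertible $d\times d$ minor $M$ (your row-spanning argument is the right one), the maps $C_\iota$ and $C_\pi$ are correctly seen to be contractive, respectively isometric, on the monomial bases, the factorization $C_\psi=C_\iota\,C_\varphi\,C_\pi$ and the ideal property give $a_n(C_\psi)\leq a_n(C_\varphi)$, and the lower bound of \cite{BLQR} on $\D^d$ (applicable since $\psi$ is a compact self-map with $\psi'(0)=M$ invertible, so $\psi(\D^d)$ has non-empty interior) yields $a_n(C_\varphi)\geq c_d\,\e^{-C_d n^{1/d}}$ for every $d$, from which \eqref{mopr} follows by taking $d>p$. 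Your parenthetical remark is substantive: the paper's argument implicitly needs $A$ to possess infinitely many nonzero eigenvalues (Lemma~\ref{cruci} consumes $q=2p$ of them for every $p$), whereas an injective compact operator on $\ell^1$ can be quasinilpotent --- e.g.\ a compact weighted shift --- in which case Theorem~\ref{plusbluff} gives $\sigma(C_\varphi)=\{0,1\}$ and the spectral route stalls; your finite-dimensional reduction sidesteps this because injectivity of $M$ in dimension $d$ forces all its eigenvalues to be nonzero. What you pay is reliance on \cite{BLQR} as a black box (the paper's proof is self-contained within its framework and moreover delivers the exact spectrum, of independent interest); what you gain is robustness, and in fact a pointwise family of lower bounds $a_n\geq c_d\,\e^{-C_d n^{1/d}}$ slightly stronger than the divergence statement \eqref{mopr} alone. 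Two small presentational points: part $2)$ of the theorem concerns $\varphi'(0)$ for the \emph{original} $\varphi$, and this follows directly from the averaging formula \eqref{bohr} in the proof of Lemma~\ref{compact} without any normalization (the constant term $\varphi(0)$ integrates out), so your detour through $\tilde\varphi$ is unnecessary there; and your transfer step should state explicitly, as you do implicitly, that $a_n(C_\varphi)\geq a_n(C_{\tilde\varphi})/\Vert C_{\Psi_{-a}}\Vert$ with $C_{\Psi_{-a}}$ bounded by Lemma~\ref{trans}, which is exactly the paper's own reduction.
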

\begin{proof} 
The proof is based on the previous Theorem~\ref{plusbluff}. Without loss of generality, we can assume that $\varphi(0)=0$ and 
$\varphi'(0)$ is injective, by using a point $a$ at which $\varphi'(a)$ is injective, and then the fact that automorphisms of $\Omega$ act transitively on $\Omega$,  
act boundedly on $H^{2}(\Omega)$, and the ideal property of approximation numbers. More precisely, we pass to $\Psi=\Psi_b\circ \varphi\circ \Psi_a$ with 
$b=\varphi(a)$ and get: 
\begin{displaymath} 
\Psi(0)=0 \quad \text{and}\quad  \Psi'(b)=\Psi'_{b}(b) \, \varphi'(a) \, \Psi'_{a}(0)   
\end{displaymath} 
injective, since $\Psi'_{b}(b)$ and $\Psi'_{a}(0)$ are, and $\Psi_a$ and $\Psi_b$  are automorphisms of $\Omega$.
\smallskip

We now have the following simple but crucial lemma.
\begin{lemme}\label{cruci}
Whatever the choice of the numbers $\lambda_j$ with $0<|\lambda_j|<1$, denoting by $(\delta_n)_{n\geq 1}$ the non-increasing rearrangement of the numbers 
$\lambda^\alpha$, one has: 
\begin{displaymath} 
\sum_{n\geq 1} \frac{1}{\log^{p}(1/\delta_n)}=\infty \quad \text{for all} \quad p<\infty \, . 
\end{displaymath} 
\end{lemme}

\begin{proof} [Proof of the Lemma]
For any positive integer $p$, we set:
\begin{displaymath} 
q= 2p \, ,\quad   \log 1/|\lambda_j|=A_j \, ,
\end{displaymath} 
and we use that:
\begin{displaymath} 
\sum_{1\leq j\leq q} \alpha_j\,A_j\leq \bigg(\sum_{1\leq j\leq q}\alpha_{j}^2\bigg)\bigg(\sum_{1\leq j\leq q} A_{j}^2\bigg) 
=:C_q \,  \bigg(\sum_{1\leq j\leq q}\alpha_{j}^2\bigg) = C_q\Vert \alpha\Vert^2 \, ,
\end{displaymath} 
where $\Vert\, . \,\Vert$ stands for the euclidean norm in $\R^q$. We then get: 
\begin{align*} 
\sum_{n\geq 1} \frac{1}{\log^{p}(1/\delta_n)} 
& =\sum_{\alpha>0} \frac{1}{\log^{p}(1/|\lambda^\alpha|)} \\
& \geq \sum_{\alpha_j\geq 1, \, 1\leq j\leq q}\frac{1}{\log^{p}(1/|\lambda_1^{\alpha_1}|\cdots 1/|\lambda_q^{\alpha_q}|)} \\
& =\sum_{\alpha_j\geq 1,\, 1\leq j\leq q}\frac{1}{(\alpha_1 A_1+\cdots+ \alpha_q A_q)^p} \\
& \geq C_{q}^{-p} \sum_{\alpha_j\geq 1,\, 1\leq j\leq q}\frac{1}{(\alpha_{1}^2 +\cdots +\alpha_{q}^2 )^p} \\
& = C_{q}^{-p} \sum_{\alpha_j\geq 1,\, 1\leq j\leq q}\frac{1}{\Vert\alpha\Vert^{q}}=\infty \,,
\end{align*} 
because:
\begin{displaymath} 
\int_{x\in \R^q, \, \Vert x\Vert \geq 1}\frac{1}{\Vert x\Vert^{q}} \, dx = c_q \int_{1}^\infty \frac{r^{q-1}}{r^{q}} \, dr = \infty \, .
\end{displaymath} 
This proves the lemma.
\end{proof}

This can be transferred to the approximation numbers $a_n=a_{n}(C_\varphi)$ to end the proof of Theorem~\ref{geninf}.
Indeed, we know from Lemma~\ref{cruci} that the non-increasing rearrangement $(\delta_n)$ of the eigenvalues $\lambda^\alpha$ of $C_\varphi$ satisfies
\begin{displaymath} 
\sum_{n\geq 1} \frac{1}{\log^{p}(1/\delta_{n})}=\infty \, . 
\end{displaymath} 
Since a divergent series of non-negative and non-increasing numbers $u_n$ satisfies $\sum u_{2n}=\infty$, we further see  that:
\begin{displaymath} 
\sum_{n\geq 1} \frac{1}{\log^{p}(1/\delta_{2n})}=\infty \quad  \text{for all}\quad p<\infty \, . 
\end{displaymath} 
Moreover, by  Theorem~\ref{hermann} we have:
\begin{equation}\label{hw}
\bigg(\frac{1}{2\log 1/\delta_{2 n}}\bigg)^p\leq \bigg(\frac{1}{\log 1/ (a_1 a_n)}\bigg)^p\cdot
\end{equation}
Since $1/ (\log 1/ a_1 a_n) \sim 1/ (\log 1/a_n)$, Lemma~\ref{cruci} then  gives the result. This clearly prevents an inequality of the form 
$a_n\leq C \, \e^{- c \, n^\delta}$ for some positive numbers $c, C, \delta$ and all $n\geq 1$. Indeed, this would imply:
\begin{displaymath} 
\sum_{n\geq 1} \frac{1}{\log^{p}(1/a_n)}<\infty \quad  \text{for} \quad p>1/\delta \, , 
\end{displaymath} 
contradicting \eqref{mopr}. 
\end{proof}
\smallskip

\noindent {\bf Remarks.} Let us briefly comment on the assumptions in Theorem~\ref{geninf}. \par\smallskip

$1)$ We do not need the Earle-Hamilton theorem under our assumptions. The Schauder-Tychonoff  theorem gives the existence (if not the uniqueness) of a 
fixed point for $\varphi$ in $\Omega$ (bounded and convex). \par\smallskip

$2)$ The Earle-Hamilton theorem is in some sense more general (for analytic maps) since it remains valid when $\overline{\varphi(\Omega)}$ is only 
assumed to lie strictly inside $\Omega$, i.e. when $\varphi(\Omega)\subseteq r\Omega$ for some $r<1$. But this assumption does not ensure the compactness of 
$C_{\varphi}$ as indicated by the simple example $\varphi(z)=rz$, $0<r<1$. The coordinate functions $z\mapsto z_n$ converge weakly to $0$, while 
$\Vert C_{\varphi}(z_n)\Vert_{H^{2}(\Omega)}=r$. \par\smallskip

$3)$ The mere assumption that $\overline{\varphi(\Omega)}$ is compact is not sufficient either. Juste take:
\begin{displaymath} 
\varphi(z)= \bigg(\frac{1+z_1}{2},0,\ldots, 0,\ldots \bigg) \, .
\end{displaymath} 
Since the composition operator $C_{\varphi_1}$ associated with $\varphi_{1}(z)=\frac{1+z}{2}$ is notoriously non-compact on $H^{2}(\D)$, neither is 
$C_\varphi$ on $H^{2}(\Omega)$. Yet, $\overline{\varphi(\Omega)}$ is obviously compact in $\ell^1$.
\goodbreak

\section{Possible upper bounds}

Recall that  $\Omega=\D^\infty \cap\ell^1$. 
%
%
%
%
%
%
%
\subsection{A general example}

\begin{theoreme}\label{soft} 
Let $\varphi((z_j)_j)=(\lambda_j z_j)_j$ with $ |\lambda_j|<1$ for all $j$, so that $\varphi(\Omega)\subseteq \Omega$ and $\varphi'(0)$ is the diagonal 
operator with eigenvalues $\lambda_j$, $j \geq 1$, on the canonical basis of $\ell^1$. Let $p>0$. Then:
\begin{displaymath} 
(\lambda_j)_j \in \ell^p \quad \Longrightarrow \quad C_\varphi\in S_p \, . 
\end{displaymath} 

In particular, there exist truly infinite-dimensional symbols on $\Omega$ such that the composition operator 
$C_\varphi \colon H^{2}(\Omega)\to H^{2}(\Omega)$ is in all Schatten classes $S_p$, $p>0$. 
\end{theoreme}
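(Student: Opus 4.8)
The plan is to exploit the fact that $\varphi$ is a \emph{diagonal} symbol, so that $C_\varphi$ is already diagonalized by the monomial basis of $H^2(\Omega)$; the whole statement then reduces to an Euler-type product computation.

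First I would compute the action of $C_\varphi$ on the orthonormal basis $(e_\alpha)_{\alpha\in\N^{(\infty)}}$, where $e_\alpha(z)=z^\alpha$. Since $\varphi(z)=(\lambda_j z_j)_{j}$, one has $C_\varphi(e_\alpha)(z)=\bigl(\varphi(z)\bigr)^\alpha=\prod_{j\geq1}(\lambda_j z_j)^{\alpha_j}=\lambda^\alpha z^\alpha$, so that $C_\varphi e_\alpha=\lambda^\alpha e_\alpha$. Thus $C_\varphi$ is diagonal with respect to an \emph{orthonormal} basis, and its singular numbers are exactly the non-increasing rearrangement of the moduli $\{|\lambda^\alpha|:\alpha\in\N^{(\infty)}\}$. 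Consequently $C_\varphi\in S_p$ if and only if $\sum_{\alpha\geq0}|\lambda^\alpha|^p<\infty$.

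The key step is then to evaluate this sum by separating the variables, exactly as in the Euler formula recalled in Section~2:
\[ \sum_{\alpha\geq0}|\lambda^\alpha|^p=\prod_{j\geq1}\sum_{k=0}^\infty|\lambda_j|^{pk}=\prod_{j\geq1}\bigl(1-|\lambda_j|^p\bigr)^{-1}. \]
Taking logarithms and using $-\log(1-t)\sim t$ as $t\to0^+$, this infinite product is finite if and only if $\sum_{j\geq1}|\lambda_j|^p<\infty$, i.e.\ if and only if $(\lambda_j)_j\in\ell^p$. This yields the announced implication.

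For the final \emph{in particular} assertion, I would simply choose the $\lambda_j$ decaying very fast, say $\lambda_j=\e^{-j}$, so that $(\lambda_j)_j\in\ell^p$ for every $p>0$ and hence $C_\varphi\in S_p$ for all $p>0$. Since every $\lambda_j$ is nonzero, the differential $\varphi'(0)$ is the diagonal operator $(z_j)_j\mapsto(\lambda_j z_j)_j$ of $\ell^1$, which is injective; hence $\varphi$ is truly infinite-dimensional, as required. There is no serious obstacle here: the only points deserving care are the justification that $C_\varphi$ is genuinely diagonal (not merely triangular) with respect to the orthonormal basis $(e_\alpha)$, which is what makes the $|\lambda^\alpha|$ the true singular values, and the equivalence between convergence of the product $\prod_{j\geq1}\bigl(1-|\lambda_j|^p\bigr)^{-1}$ and the summability $(\lambda_j)_j\in\ell^p$.
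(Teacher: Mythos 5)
Your proposal is correct and follows essentially the same route as the paper: diagonalize $C_\varphi$ on the orthonormal basis $(z^\alpha)_\alpha$ with eigenvalues $\lambda^\alpha$, identify the singular numbers with the non-increasing rearrangement of the $|\lambda^\alpha|$, and evaluate $\sum_\alpha |\lambda^\alpha|^p$ by the Euler product $\prod_{j\geq 1}(1-|\lambda_j|^p)^{-1}$, concluding with the choice $\lambda_j = \e^{-j}$. Your added remarks (the equivalence of the product's convergence with $(\lambda_j)_j \in \ell^p$, and the explicit check that $\varphi'(0)$ is injective so the symbol is truly infinite-dimensional) are correct refinements of details the paper leaves implicit.
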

\begin{proof} 
Since $C_\varphi$ is diagonal on the orthonormal basis $(z^\alpha)_\alpha$ of the Hilbert space $H^{2}(\Omega)$, with 
$C_{\varphi}(z^\alpha)=\varphi^\alpha$, its approximation numbers are the non-increasing rearrangement of the moduli of eigenvalues $\lambda^\alpha$, 
so that an Euler product-type  computation gives:
\begin{displaymath} 
\sum_{n=1}^\infty a_{n}^p=\sum_{\alpha\in E}|\lambda^{\alpha}|^p 
=\sum_{\alpha_j\in \N}\prod_{j\geq 1}|\lambda_{j}|^{p\alpha_j}=\prod_{j=1}^\infty(1-|\lambda_{j}|^p)^{-1}<\infty \, .  
\end{displaymath} 
To obtain $C_\varphi\in \bigcap_{p>0} S_p$, just take $\lambda_n= \e^{- n}$. This ends the proof.
\end{proof}

\subsection{A sharper upper bound}

By making a more quantitative study, we can prove the following result.
\begin{theoreme}\label{supscha} 
For any $0<\delta<1$, there exists a compact composition operator on $H^{2}(\Omega)$, with a truly infinite-dimensional symbol, such that, for some positive 
constants $c,C, b$, we have:
\begin{displaymath} 
a_{n}(C_\varphi)\leq C \, \exp \big( -c\, \e^{b \, (\log n)^{\delta}} \big) \, .
\end{displaymath} 
\end{theoreme}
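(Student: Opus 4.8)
The plan is to exhibit an explicit diagonal symbol and reduce everything to one counting estimate. Following Theorem~\ref{soft}, I would take $\varphi((z_j)_j) = (\lambda_j z_j)_j$ with $\lambda_j = \exp(-\e^{j^\rho})$, where the exponent $\rho = \delta/(1-\delta) > 0$ is chosen to fit the target rate (the reason for this precise value emerges below). Writing $A_j = \log(1/|\lambda_j|) = \e^{j^\rho}$, I would first check admissibility: since $\varphi_j(0)=0$ and $\sum_j |\varphi_j(0)| = 0 < \infty$, Proposition~\ref{simple} shows $\varphi$ maps $\Omega$ into itself and $C_\varphi$ is bounded; the differential $\varphi'(0)$ is diagonal with nonzero entries $\lambda_j$, hence injective, so $\varphi$ is truly infinite-dimensional. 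For compactness of the mapping, I would note that $\varphi(\Omega)$ is contained in the box $K = \{w : |w_j| \le |\lambda_j|\ \forall j\}$, which, because $\sum_j |\lambda_j| < \infty$, is a compact subset of $\ell^1$ with uniformly small tails and lies inside $\Omega$ (as $|\lambda_j| < 1$). Thus $\overline{\varphi(\Omega)}$ is compact in $\Omega$, and Lemma~\ref{compact} gives compactness of $C_\varphi$.

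Next, as in Theorem~\ref{soft}, $C_\varphi$ is diagonal on the orthonormal basis $(z^\alpha)_{\alpha\in E}$ with eigenvalues $\lambda^\alpha$, so $a_n(C_\varphi)$ is exactly the non-increasing rearrangement of the numbers $|\lambda^\alpha| = \e^{-\langle\alpha,A\rangle}$, where $\langle\alpha,A\rangle = \sum_j \alpha_j A_j$. I would therefore pass to the counting function $N(t) = \#\{\alpha\in E : \langle\alpha,A\rangle \le t\}$. Then $a_n \le \e^{-t}$ whenever $N(t) < n$, so the whole matter reduces to an upper bound for $N(t)$. Since $A_j$ is increasing with $A_1 > 0$, every $\alpha$ counted by $N(t)$ is supported on $\{j : A_j \le t\}$, a set of cardinality $J(t) = \#\{j : \e^{j^\rho} \le t\} \approx (\log t)^{1/\rho}$, and satisfies $\alpha_j \le t/A_j$ on its support. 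This yields the box over-count $N(t) \le \prod_{j\le J(t)} (1 + t/A_j)$, which conveniently decouples the coordinates.

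Taking logarithms, I would estimate $\log N(t) \le \sum_{j\le J(t)} \log(1 + t\,\e^{-j^\rho}) \le \sum_{j\le J(t)} (\log 2 + \log t - j^\rho)$ and compare the sum to the integral $\int_0^{(\log t)^{1/\rho}} (\log t - x^\rho)\,dx = \tfrac{\rho}{\rho+1}(\log t)^{1+1/\rho}$. With $\rho = \delta/(1-\delta)$ one has $1 + 1/\rho = 1/\delta$, so $\log N(t) \le B(\log t)^{1/\delta}$ for large $t$, with $B$ an explicit constant (this is the intermediate regime: $N$ grows faster than every power of $t$ but slower than $\e^{\eps t}$, as required). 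Finally I would invert this bound: requiring $\exp(B(\log t)^{1/\delta}) < n$ permits $t$ up to about $\exp\big(B^{-\delta}(\log n)^\delta\big)$, and for such $t$ we get $a_n \le \e^{-t}$, i.e. $a_n \le C\exp\big(-c\,\e^{b(\log n)^\delta}\big)$ with $b = B^{-\delta}$ (slightly decreased) and suitable $c,C > 0$.

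The admissibility checks and the sum-to-integral comparison are routine. The step I expect to need the most care is the bookkeeping that pins down the exponent $1/\delta$: the whole result hinges on matching $1 + 1/\rho = 1/\delta$, and on the fact that the crude box over-count, which avoids any saddle-point analysis of the partition-type generating function $\prod_j (1 - |\lambda_j|^s)^{-1}$, is already sharp enough at the level of $\log N(t)$ to produce the stated double-exponential decay after inversion.
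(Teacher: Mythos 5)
Your proposal is correct, and it uses the paper's example verbatim --- a diagonal symbol with $\lambda_j=\e^{-A_j}$, $A_j=\e^{j^\rho}$, your $\rho=\delta/(1-\delta)$ being exactly the paper's $\alpha$ (the paper gets $\delta=\alpha/(\alpha+1)$, the same relation) --- but the central quantitative step is genuinely different. The paper never introduces the counting function $N(t)=\#\{\alpha : \langle\alpha,A\rangle\le t\}$; instead it uses the Chebyshev-type bound $N\,a_N^r\le\sum_{n\ge 1}a_n^r=F(r)=\prod_{n}(1-\e^{-rA_n})^{-1}$, estimates $\log F(r)\lesssim(\log 1/r)^{1+1/\alpha}$ by expanding $\log F$ as $\sum_m m^{-1}\sum_n \e^{-rmA_n}$ and bounding the inner sums by integrals $I_m(r)$, and then optimizes over $r=1/x$; the two methods are dual (optimizing $r$ in $N(t)\le \e^{rt}F(r)$ is precisely a Chernoff bound on your $N(t)$), which is why they produce the same exponent $1+1/\rho=1/\delta$. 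Your route is the more elementary of the two: the box over-count $N(t)\le\prod_{j\le J(t)}(1+t/A_j)$ plus a sum-to-integral comparison replaces the paper's two-parameter $(m,r)$ integral estimates, and your remaining steps (the $\log 2\cdot J(t)\approx(\log t)^{1/\rho}$ term being of lower order than $(\log t)^{1+1/\rho}$, and the inversion $t_n\approx\exp(b(\log n)^{\delta})$ with $b$ slightly decreased and $C$ absorbing small $n$) all check out, as do the admissibility verifications (Proposition~\ref{simple} for boundedness, injectivity of the diagonal differential, and compactness of $\overline{\varphi(\Omega)}$ inside the compact box $K\subset\Omega$, so Lemma~\ref{compact} applies). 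What the paper's generating-function method buys in exchange is uniformity and side information: it bounds all the Schatten norms at once, and it specializes instantly to other weights --- e.g.\ $A_n=n$ gives $a_N\le\exp(-c\log^2 N)$ via the Dedekind eta function, a computation your lattice-point count would reproduce only after redoing the counting for each new choice of $(A_n)$.
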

\begin{proof} 
Take the same operator $C_\varphi$ as in Theorem~\ref{soft}, with $\lambda_n = \e^{- A_n}$ where the positive numbers $A_n$ have to be adjusted. 
Its approximation numbers $a_N$ are then the non-increasing rearrangement of the sequence of numbers $(\eps_n)_n := (\lambda^{\alpha})_\alpha$. 
This suggests using a generating function argument, namely considering $\sum \eps_n x^n$, but the rearrangement perturbs the picture. Accordingly, we follow 
a sligthly different route. Fix an integer $N\geq 1$ and a real number $r > 0$. Observe that, following the proof of Theorem \ref{soft}: 
\begin{displaymath} 
N\, a_{N}^{r}\leq \sum_{n=1}^N a_{n}^{r}\leq \sum_{n=1}^\infty a_{n}^{r}=\prod_{n=1}^\infty(1-e^{-rA_n})^{-1}.
\end{displaymath} 

First, consider the simple example $A_n=n$. We get:
\begin{displaymath} 
N\, a_{N}^{r}\leq \eta \, (\e^{-r}) 
\end{displaymath} 
where $\eta$ is the Dedekind eta function (see \cite{CHA}) given by:
\begin{displaymath} 
\qquad \qquad \eta(x)=\prod_{n=1}^\infty (1- x^n)^{-1}=\sum_{n=0}^\infty p(n) \, x^n \, ,\qquad |x|< 1 \, , 
\end{displaymath} 
where $p(n)$ is the number of partitions of the integer $n$. It is well-known (\cite{CHA}, Ch.~7, p.~169) that 
$\eta \, (\e^{-r})\leq \e^{D /r}$ with $D =\pi^2 /6$, so that:
\begin{displaymath} 
a_N \leq \exp\bigg(\frac{D}{r^2} -\frac{\log N}{r}\bigg) \, .
\end{displaymath} 
Optimizing with $r = 2D / \log N$, we get:
\begin{displaymath} 
a_N \leq \exp (- c\log^{2} N) \, ,
\end{displaymath} 
with $c = 1 /4D$. This is more precise than Theorem~\ref{soft}.
\smallskip

We now show that if $A_n$ increases faster, we can achieve the decay of Theorem~\ref{supscha}. As before, we get in general:
\begin{equation}\label{general} 
a_N \leq \inf_{x >1} \big( \exp \, [x(\log F(x^{-1}) - \log N)] \big)\, ,
\end{equation}
where 
\begin{displaymath} 
F(r) = \prod_{n=1}^\infty(1 - \e^{-r A_n})^{-1} \, .
\end{displaymath} 
We have:
\begin{displaymath} 
\log F(r)=\sum_{n=1}^\infty \bigg( \sum_{m=1}^\infty \frac{\e^{- r m A_n}}{m}\bigg) 
= \sum_{m = 1}^\infty \frac{1}{m} \bigg( \sum_{n = 1}^\infty \e^{- r m A_n} \bigg) \, .
\end{displaymath} 
Now, take $A_n = \e^{n^\alpha}$ where $\alpha>0$ is to be chosen. We have:
\begin{displaymath} 
\sum_{n=1}^\infty \e^{-r m \, \e^{n^\alpha}} \leq \int_{0}^\infty \e^{- r m \, \e^{t^\alpha}}dt =:I_{m}(r) \, .
\end{displaymath} 
Standard estimates now give, for $r <1$:
\begin{align*}
I_{m}(r) 
& =\int_{1}^\infty \e^{- r m x} \frac{1}{\alpha} (\log x)^{ \frac{1}{\alpha}-1}\,\frac{dx}{x}
=\int_{rm}^\infty \e^{-y} \frac{1}{\alpha} \bigg(\log \frac{y}{rm} \bigg)^{ \frac{1}{\alpha} - 1}\,\frac{dy}{y} \\
& \lesssim \bigg(\log \frac{1}{r} \bigg)^{ \frac{1}{\alpha} - 1}\int_{rm}^\infty \e^{- y} \frac{dy}{y}
\lesssim \e^{- r m} \bigg(\log \frac{1}{r} \bigg)^{ \frac{1}{\alpha}} \, ,
\end{align*}
so that: 
\begin{displaymath} 
\log F(r) \lesssim (\log 1/r)^{ \frac{1}{\alpha}}\sum_{m=1}^\infty m^{-1} \e^{-r m} \lesssim (\log 1/r)^{ \frac{1}{\alpha}+1} \, .
\end{displaymath} 
Going back to \eqref{general}, we get, for some constant $C>0$, and for $x=1/r>1$:
\begin{displaymath} 
a_N \leq C  \exp\big[C \, x \big( (\log x)^{ \frac{1}{\alpha}+1}-\log N \big)\big] \, .
\end{displaymath} 
Adjusting $x=x_N>1$ so as to have $(\log x)^{ \frac{1}{\alpha}+1}=\log N-1$, that is:
\begin{displaymath} 
x_N = \exp\big[(\log (N/\e))^{\frac{\alpha}{\alpha+1}}\big] \, , 
\end{displaymath} 
we get $a_N \leq C \, \e^{- c \, x_N}$, which is the claimed result with $\delta= \alpha / (\alpha+1)$. \par 

This $\delta$ can be taken arbitrarily in $(0,1)$ by choosing $\alpha$ suitable, and we are done. 
\end{proof}

\noindent{\bf Remark.} Of course, $\delta=1$ is forbidden, because this would give $a_n \leq C\, \e^{- c \, n^b}$, implying:
\begin{displaymath} 
\sum_{n =1}^\infty  \frac{1}{(\log 1/a_n)^p} \lesssim \sum_{n=1}^\infty n^{- b \, p} <\infty \, ,
\end{displaymath} 
for large $p$, and contradicting Theorem~\ref{geninf}.

\bigskip

Daniel Li \\ 
Univ. Artois, Laboratoire de Math\'ematiques de Lens (LML) EA~2462, \& F\'ed\'eration CNRS Nord-Pas-de-Calais FR~2956 \\
Facult\'e Jean Perrin, Rue Jean Souvraz, S.P.\kern 1mm 18 \\
F-62\kern 1mm 300 LENS, FRANCE \\
daniel.li@euler.univ-artois.fr
\smallskip

Herv\'e Queff\'elec \\
Univ. Lille Nord de France, USTL \\ 
Laboratoire Paul Painlev\'e U.M.R. CNRS 8524 \& F\'ed\'eration CNRS Nord-Pas-de-Calais FR~2956 \\
F-59\kern 1mm 655 VILLENEUVE D'ASCQ Cedex, FRANCE \\
Herve.Queffelec@univ-lille1.fr
\smallskip
 
Luis Rodr{\'\i}guez-Piazza \\
Universidad de Sevilla, Facultad de Matem\'aticas, Departamento de An\'alisis Matem\'atico \& IMUS \\ 
Apartado de Correos 1160 \\ 
41\kern 1mm 080 SEVILLA, SPAIN \\
piazza@us.es


\begin{thebibliography}{99}

\bibitem {ARGALI} R.~Aron, P.~Galindo and M.~Lindstr\"om, \emph{Compact homomorphisms between algebras of analytic functions}, 
Studia Math. \textbf{123}(3) (1997), 235--247.


\bibitem {BLQR} F.~Bayart, D.~Li, H.~Queff\'elec and L.~Rodr{\'\i}guez-Piazza, {\it Approximation numbers of composition operators on the Hardy and 
Bergman spaces of the ball and of the polydisk}, Math. Proc. of the Cambridge Philos. Soc., \emph{to appear}.


\bibitem {CARL-STEPH}  B.~Carl and I.~Stephani. \emph{Entropy, compactness and the approximation of operators}, 
Cambridge Tracts in Mathematics 98, Cambridge University Press, Cambridge (1990). 

\bibitem {CHAE} S.~Chae, \emph{Holomorphy and Calculus in Normed Spaces}, Monographs and Textbooks in Pure and Applied Mathematics 92, 
Marcel Dekker (1985).

\bibitem {CHA} K.~Chandrasekharan, \emph{Arithmetical Functions}, Grundlehren Math. Wiss., Band 167, Berlin, Springer (1970).

\bibitem {CLA} D.~Clahane, \emph{Spectra of compact composition operators over bounded symmetric domains}, 
Integral Equations Operator Theory \textbf{51} (2005), 41--56.

\bibitem {COGA} B.~Cole and T.~Gamelin, \emph{Representing measures and Hardy spaces for the infinite polydisk algebra}, 
Proc. Lond. Math. Society \textbf{53} (1986), 112--142.

\bibitem {COMA} C.~Cowen, B.~MacCluer, \emph{Composition Operators on Spaces of Analytic Functions}, 
Studies in Advanced Mathematics,  C.R.C. Press (1994). 

\bibitem {ELLIOT-JURY} S.~E.~Elliott and M.~T.~Jury, \emph{Composition operators on Hardy spaces of a half plane}, 
Bull. London Math. Soc. {\bf 44}, no.~3 (2012), 489--495.



\bibitem {LEF} P.~Lef\`evre, {\it  Generalized Essential Norm of Weighted  Composition Operators on some Uniform Algebras of Analytic Functions}, 
Integral Equations Operator Theory {\bf 63} (2009), 557--569.

 

\bibitem {MCCL} B.~MacCluer, {\it  Spectra of compact composition operators on $H^{p}(B_N)$}, Analysis {\bf 4} (1984), 87--103.

\bibitem {MAT} V.~Matache, {\it   Composition operators on Hardy spaces on a half-plane}, Proc. Amer. Math. Soc. {\bf 127}(5) (1999), 1483--1491.

\bibitem{RU} W.~Rudin, {\it Functional Analysis}, Second edition, Internat. Ser. Pure Appl. Math., McGraw-Hill (1991).


\bibitem{Shap-livre} J.~H.~Shapiro, {\it Composition Operators and Classical Function Theory}, 
Universitext, Tracts in Mathematics, Springer-Verlag, New York (1993). 

\bibitem{Shap-Smith} J.~H. Shapiro and W.~Smith, {\it Hardy spaces that support no compact composition operators}, 
J.~Funct.~Anal. {\bf 205} no.~1 (2003), 62--89.


\end{thebibliography}
\end{document}